\begin{document}
\title[Product formula for limits of normalized characters of KR modules]{Product formula for the limits of normalized characters of Kirillov-Reshetikhin modules}
\author{Chul-hee Lee}
\address{School of Mathematics, Korea Institute for Advanced Study, Seoul 130-722, Korea}
\email{chlee@kias.re.kr}
\date{\today}
%\thanks{This work was supported by ..}
\begin{abstract}
The normalized characters of Kirillov-Reshetikhin modules over a quantum affine algebra have a limit as a formal power series. Mukhin and Young found a conjectural product formula for this limit, which resembles the Weyl denominator formula. We prove this formula except for some cases in type $E_8$ by employing an algebraic relation among these limits, which is a variant of $Q\widetilde{Q}$-relations.
\end{abstract}
\maketitle
\section{Introduction}
Let $\g$ be a complex simple Lie algebra. Kirillov-Reshetikhin (KR) modules form an important family of finite-dimensional irreducible representations of the quantum affine algebra $\uqghat$. Many important objects and results from solvable lattice models in mathematical physics can find a rigorous mathematical foundation in the representation theory of $\uqghat$, in which KR modules play a key role. A good example is the recent progress \cite{MR3397389} on understanding the spectra of Baxter's $Q$-operators \cite{MR0290733}.
In \cite{MR2982441} Hernandez and Jimbo introduced a certain category $\mathcal{O}$ of representations of a Borel subalgebra of $\uqghat$. One of the most important objects in $\mathcal{O}$ is called a prefundamental representation, which is infinite-dimensional and is obtained as the limit of a sequence of KR modules via an asymptotic construction. It turns out that Baxter's $Q$-operators acquire a solid representation theoretic background in terms of prefundamental representations. See \cite{MR3576119, 2016arXiv160605301F} also for more recent developments.

The construction of prefundamental representations in \cite{MR2982441} was partly motivated by the fact that the normalized $q$-characters of KR modules have limits as formal power series. Nakajima and Hernandez have proved this convergence property for simple-laced cases \cite{MR1993360} and in general \cite{MR2254805}, respectively. This limit can now be understood, for example, as the normalized $q$-character of a prefundamental representation \cite[Section 6.1]{MR2982441}. For ordinary characters, this implies the existence of the following limit
\begin{equation}\label{eqn:norchar}
\nc{a} : = \lim_{m\to \infty} e^{-m\omega_a}\Qam
\end{equation}
as a formal power series in $\Z[[e^{-\alpha_j}]]_{j\in I}$, where
$\Qam\in \Z[P]$ denotes the character of the KR module associated with $a\in I$ and $m\in \Z_{\geq 0}$. Here $I$ is the set of nodes of the Dynkin diagram of $\g$. The study of this limit goes back to \cite{MR1745263} and \cite{MR1903843}, in which the authors prove that its existence plays a critical role in establishing the Kirillov-Reshetikhin conjecture \cite{Kirillov1990}.

In \cite{MR3217701} Mukhin and Young conjectured that
\begin{equation}\label{eqn:main}
\nc{a} = \ncMY{a},
\end{equation}
where
\begin{equation}\label{eqn:MY}
\ncMY{a} := \frac{1}{\prod_{\alpha\in \posR}(1-e^{-\alpha})^{[\alpha]_a}}.
\end{equation}
Here, $[\alpha]_a\in \Z_{\geq 0}$ denotes the coefficient in the expansion $\alpha =\sum_{a\in I}[\alpha]_a\alpha_a$. We note that their conjecture is for more general minimal affinizations, not just for KR modules of $\uqghat$. A different version of an explicit formula, with the flavor of fermionic formula, is also proved in \cite[Theorem 6.4]{MR2982441}, but (\ref{eqn:MY}) looks much more concrete and compact; see \cite[Remark 4.19]{MR3500832} also for a geometric $q$-character formula for prefundamental representations. The main goal of this paper is to prove the following :
\begin{thm}[{\cite[Conjecture 6.3]{MR3217701}}]\label{thm:main}
Let $\g$ be a simple Lie algebra and $a\in I$ be a node in its Dynkin diagram. We assume that $a\notin \{4,8\}$ when $\g$ is of type $E_8$ (see Figure \ref{fig:E8}). Then (\ref{eqn:main}) holds.
\end{thm}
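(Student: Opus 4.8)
The plan is to characterize the limit $\nc{a}$ by a system of bilinear functional equations---a classical-limit avatar of the $Q\widetilde{Q}$-relations---which $\ncMY{a}$ can be shown to satisfy as well, and then to conclude by a uniqueness statement. Since the convergence giving (\ref{eqn:norchar}) is granted, both $\nc{a}$ and $\ncMY{a}$ lie in $1+J$, where $J\subset\Z[[e^{-\alpha_j}]]_{j\in I}$ is the ideal generated by the $e^{-\alpha_j}$, so it suffices to match their coefficients in every degree.

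First I would extract the governing relation. The KR characters obey the $Q$-system, and I would note at the outset that its naive $m\to\infty$ limit is vacuous: after dividing the relation for $\Qam$ by $e^{2m\omega_a}$, the off-diagonal term picks up a factor $e^{-m\alpha_a}\to 0$ in $\Z[[e^{-\alpha_j}]]_{j\in I}$, leaving only the tautology $\nc{a}^2=\nc{a}^2$. The nontrivial content therefore sits at \emph{subleading} order. I would track the two-term asymptotics $\Qam = e^{m\omega_a}\nc{a}-(\text{shift})\,\mathcal{R}_a+\cdots$, where $\mathcal{R}_a$ is a companion series (the classical $\widetilde{Q}$-partner, possibly itself expressible through the $\nc{b}$), and read off from the $O(e^{-m\alpha_a})$ part of the $Q$-system a closed bilinear system relating $\nc{a}$, $\mathcal{R}_a$, and the neighbouring $\nc{b}$ for $b\sim a$. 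This is the variant of the $Q\widetilde{Q}$-relations advertised in the abstract; already in type $A_1$ it collapses to the single identity $\nc{a}^2(1-e^{-\alpha_a})^2=1$, which is exactly (\ref{eqn:MY}). For non-simply-laced $\g$ I would run the same computation starting from the twisted $Q$-system.

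Next I would verify that $\ncMY{a}$ satisfies the identical system: this means producing an explicit companion $\mathcal{R}_a^{\mathrm{MY}}$ built from the product formula and checking the resulting root-theoretic identity. The resemblance of (\ref{eqn:MY}) to the Weyl denominator formula is precisely what should make this tractable, as the requisite cancellations are governed by the action of the Weyl group on $\posR$ and on the weights $m\omega_a$. A uniqueness argument---solving the bilinear system degree by degree in $J$, using the normalization (constant term $1$) and the decay of the companion series to pin down the recursion---would then force $\nc{a}=\ncMY{a}$. To keep the node-by-node verification finite, I would exploit sub-diagram inductions, restricting to Levi-type subalgebras so that each $\nc{a}$ is reached from nodes whose formula is already known.

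The hard part is the verification step together with the uniqueness it feeds into: the root-system identity for $\ncMY{a}$ must be confirmed case by case, and for the two nodes $a\in\{4,8\}$ in type $E_8$ the relevant cancellation does not reduce to a case treated earlier in the induction, so the argument there cannot be completed by this method. This is exactly the origin of the exclusion in the statement, and I would isolate those two nodes explicitly rather than attempt a uniform argument that covers them.
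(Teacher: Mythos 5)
Your high-level strategy --- extract a bilinear $Q\widetilde{Q}$-type relation from the subleading terms of the $Q$-system, verify that $\left(\ncMY{a}\right)_{a\in I}$ satisfies the same relation, and propagate $\nc{a}=\ncMY{a}$ through the diagram from anchor nodes --- is exactly the paper's. But the two steps you defer are precisely where the substance lies, and your account of the $E_8$ exclusion is wrong. The ``two-term asymptotics'' $\Qam = e^{m\omega_a}\nc{a}-(\text{shift})\,\mathcal{R}_a+\cdots$ cannot simply be ``tracked'': convergence of $e^{-m\omega_a}\Qam$ in $\Z[[e^{-\alpha_j}]]_{j\in I}$ produces no companion series, no $m$-independent coefficient of a second exponential, and no meaning for $s_a$ applied to the limit (the Weyl group does not act on that power series ring). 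The paper makes this step rigorous only by importing Theorem \ref{thm:linQ} --- the KR characters satisfy a linear recurrence whose characteristic roots are indexed by $W$-invariant weight sets and are \emph{simple} --- then proving the weight inequalities of Lemma \ref{lem:wtineq} so that $e^{m(\omega_a+s_a(\omega_a))}$ dominates $(\Qam)^2-Q^{(a)}_{m+1}Q^{(a)}_{m-1}$, identifying the companion coefficient as $s_a(\ncC{a})$ via $W$-equivariance of the recurrence coefficients, and carrying all of this out for functions on the real domain $\dom_1$ rather than for formal series (Propositions \ref{prop:Cdomain}, \ref{prop:fixdomain}, \ref{thm:algrel}). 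This input is exactly what causes the exclusion, and here your diagnosis fails: the verification that $\ncMY{a}$ satisfies the relation is uniform and easy for \emph{every} node of \emph{every} type (Lemma \ref{lma:simpleref}, Proposition \ref{prop:relforMY}); nothing on the Mukhin--Young side breaks at $a\in\{4,8\}$ in $E_8$. What breaks is the KR side: Theorem \ref{thm:linQ} is unknown for nodes $3,4,5,8$ of $E_8$, so the relation (\ref{eqn:norsys}) is established only for $a\in\{1,2,6,7\}$, and those four equations never involve $\X{4}$ or $\X{8}$ --- the two excluded nodes are simply unreachable from the anchors.

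Second, your uniqueness step has nothing to stand on. With the companions $\mathcal{R}_a$ kept as separate unknowns the system is underdetermined, and closing it requires expressing $\mathcal{R}_a$ through the $\nc{b}$ --- which is again the missing $s_a$-relation above (compare Remark \ref{rmk:QQcomparison}). Even with that, neither you nor the paper shows that the system plus normalization determines the solution outright; the paper's uniqueness assertions are always conditional, of the form ``a non-vanishing solution is determined by its values at certain nodes,'' so the induction must be seeded by nodes where $\nc{a}=\ncMY{a}$ is proved independently. You never produce such a seed: ``restricting to Levi-type subalgebras'' is not an argument, and it is not what the paper does. The anchors come from Chari's polyhedral decomposition formulas combined with the Weyl denominator formula --- nodes with $[\theta]_a=1$ (Proposition \ref{prop:type1}), nodes with $\omega_a=\theta$ (Proposition \ref{prop:type2}) --- and for the harder anchors $(E_8,1)$ and $(F_4,4)$ the resulting denominator-type identities are checked by computer algebra (Propositions \ref{prop:type3E8}, \ref{prop:type3F4}). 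Without these base cases your degree-by-degree recursion has no starting point on any chain of the diagram.
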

Previously, (\ref{eqn:main}) was proved for type $A_r$, $B_r$, $C_r$ in \cite{MR3120578} and for type $G_2$ in \cite{MR3520056}. In view of (\ref{eqn:MY}), it is not surprising to see that the combinatorics of roots appear in some important steps in these works. In fact, once there is a polyhedral formula available for a given node, proving (\ref{eqn:main}) is equivalent to checking some combinatorial identities similar to the Weyl denominator formula; see Subsection \ref{ss:polyform} for the meaning of polyhedral formula. This verification is still not an entirely automatic procedure in general; see Propositions in Section \ref{sec:MYandpoly} to get some flavor of the problem. Although this problem in full generality seems to be an interesting subject in its own right, it seems to require a separate combinatorial consideration for each type at this point, making it hard to obtain a uniform proof of (\ref{eqn:main}).

In this paper, we take a more uniform approach to (\ref{eqn:main}), minimizing such combinatorial consideration. The key ingredient is the following relation for a family $X=\left(\X{a}\right)_{a\in I}$ :
%of functions defined on a certain open set $\dom_1\subseteq \h_{\R}^{*}$ (see Definition \ref{def:dom1}) :
\begin{equation}\label{eqn:norsys}
\Xrel{a} = \prod_{b:C_{ab}<0}(\X{b})^{-C_{ab}},\quad a\in I,
\end{equation}
to which $\left(\nc{a}\right)_{a\in I}$ is supposed to be a solution. Intuitively, (\ref{eqn:norsys}) is obtained by extracting the dominant exponential terms from both sides of the $Q$-system (\ref{eq:Qsys}). It requires some work to make this intuition rigorous (\ref{eqn:norsys}) looks very similar to the $Q\widetilde{Q}$-system \cite[Theorem 3.2]{2016arXiv160605301F}, but the fact that $\left(\nc{a}\right)_{a\in I}$ is a solution of (\ref{eqn:norsys}) does not seem to be an immediate consequence of the results in there; see Remark \ref{rmk:QQcomparison} for more on this. This relation involving only product looks quite compatible with the product form (\ref{eqn:MY}) of $\ncMY{a}$. It is indeed easy to show that $\left(\ncMY{a}\right)_{a\in I}$ satisfies (\ref{eqn:norsys}); see Proposition \ref{prop:relforMY}. We can use this to prove (\ref{eqn:main}) even in the absence of the corresponding polyhedral formula. For example, if $a\in I$ has a unique node $b\in I$ with $C_{ab}<0$, and moreover $C_{ab}=-1$, then (\ref{eqn:norsys}) takes the form
$$
\Xrel{a} = \X{b}.
$$
Then we can easily conclude that $\nc{b} = \ncMY{b}$ once we already know $\nc{a} = \ncMY{a}$. In this way we get a relatively simple proof of (\ref{eqn:main}), which works quite well even when $\g$ is of exceptional type. In summary, the steps to prove (\ref{eqn:main}) consists of establishing it for the simplest nodes $a\in I$, and using (\ref{eqn:norsys}) inductively to treat more complicated cases. It might be interesting to establish a more direct link between (\ref{eqn:norsys}) and \cite[Theorem 3.2]{2016arXiv160605301F}, and find analogous relations for more general minimal affinizations, in a form useful to the conjecture of Mukhin and Young.

This paper is organized as follows. In Section \ref{sec:framework}, we set up our notation and explain the necessary background. In particular, we introduce some analogues of $\nc{a}$, which could be defined as a consequence of certain properties of linear recurrence relations among the characters of KR modules; see Subsection \ref{ss:aNC}. In Section \ref{sec:algrelnc}, we study (\ref{eqn:norsys}). In Section \ref{sec:MYandpoly}, we prove (\ref{eqn:main}) for the simplest nodes $a\in I$. In Section \ref{sec:mainproof}, we finish the proof of Theorem \ref{thm:main} by combining the results from Section \ref{sec:algrelnc} and Section \ref{sec:MYandpoly}.

\section{Limits of normalized characters and their analogues}\label{sec:framework}
The main goal of this Section is to define the limits of normalized characters and their analogues for KR modules; see Definition \ref{def:anc}. Since our treatment of these objects occasionally requires the use of limits, we also consider their basic analytic properties. 
\subsection*{notation}
Throughout the paper, we will use the following notation.
\begin{itemize}
\item $\g$ : simple Lie algebra over $\C$ of rank $r$
\item $\h$ : Cartan subalgebra of $\g$
\item $I=\{1,\dots, r\}$ : index set for the Dynkin diagram of $\g$ (we use the same convention as \cite{MR1745263} except for $E_8$; for $E_8$ see Figure \ref{fig:E8})
\item $\alpha_a,\, a\in I$ : simple root 
\item $h_a,\, a\in I$ : simple coroot 
\item $\omega_a,\, a\in I$ : fundamental weight
\item $C = (C_{ab})_{a,b\in I}$ : Cartan matrix with $C_{ab}=\alpha_b(h_a)$
\item $\rho = \sum_{a\in I}\omega_a$ : Weyl vector
\item $P$ : weight lattice, 
\item $P^{+}\subseteq P$ : set of dominant integral weights  
\item $L(\la),\la \in P^{+}$ : finite-dimensional simple module of highest weight $\la$ of $\g$ (and $\uqg$)
\item $Q$ : root lattice
\item $\theta\in Q$ : highest root
\item $\h_{\R}^{*}:=\oplus_{a\in I}\R\omega_a$
\item $(\cdot,\cdot):\h_{\R}^{*}\times \h_{\R}^{*} \to \R$ : $\R$-bilinear form induced from the Killing form with $(\theta,\theta)=2$
\item $\Z[P]$ : integral group ring of $P$ (which is the same as the ring $\Z[e^{\pm \omega_j}]_{j\in I}$ of Laurent polynomials in $e^{\omega_j}$)
\item $\K:=\C(e^{\omega_j})_{j\in I}$ : field of rational functions in $e^{\omega_j}$ with coefficients in $\C$
\item $t_a : =(\theta,\theta)/(\alpha_a,\alpha_a)\in \{1,2,3\}$
\item $[\alpha]_a\in \Z\,(\alpha \in Q, \,a\in I)$ : coefficients in the expansion $\alpha = \sum_{a\in I}[\alpha]_a\alpha_a$
\item $s_a$ : simple reflection acting on $\h_{\R}^{*}$ by $s_a(\la) = \la - \la(h_a)\alpha_a$
\item $W$ : Weyl group generated by $s_a$
\item $W_{\la},\, \la\in P$ : isotropy subgroup of $W$ fixing $\la$
\item $W_{J},\, J\subseteq I$ : standard parabolic subgroup of $W$ generated by $\{s_a : a\in J\}$
\item $L(\la),\, \la \in P^{+}$ : irreducible highest weight representation of $\g$
\item $\chi(V)\in \Z[P]$ : character of a finite-dimensional $\g$-module (or $\uqg$-module) $V=\oplus V_{\la}$ with weight spaces $V_{\la}$, i.e. $\chi(V)=\sum_{\la\in P}(\dim V_{\la})e^{\la}$
\item $\posR$ : set of positive roots
\item $\posR(J) = \{\alpha\in \posR : [\alpha]_a = 0\, \forall a\in I\backslash J\},\, J\subseteq I$, i.e. set of positive roots which can be written as a linear combination of $\{\alpha_j:j\in J\}$
\item $\lambda\geq \mu,\, \lambda,\mu\in \h_{\R}^{*}$ if $\lambda-\mu\in \oplus_{a\in I}\R_{\geq 0}\alpha_a$ 
\item $\lambda\succeq \mu,\, \lambda,\mu\in \h_{\R}^{*}$ if $\lambda-\mu\in \oplus_{a\in I}\Z_{\geq 0}\alpha_a$
\end{itemize}
\begin{figure}
\begin{center}
\begin{tikzpicture}
\begin{scope}[start chain]
\foreach \dyni in {1,...,7} {
\dnode{\dyni}
}
\end{scope}
\begin{scope}[start chain=br going above]
\chainin (chain-3);
\dnodebr{8}
\end{scope}
\end{tikzpicture}
\caption{Dynkin diagram of type $E_8$} \label{fig:E8}
\end{center}
\end{figure}
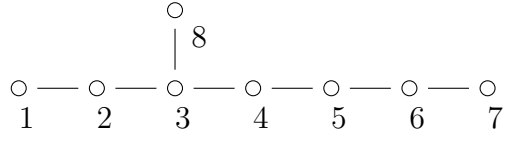

\subsection{Normalized characters of Kirillov-Reshetikhin modules}
Here we mainly collect some results from \cite{MR2254805} that we will use later on. Let $q\in \C^{\times}$ be a complex number which is not a root of unity. For each $(a,m,u) \in I\times \Z_{\geq 0}\times \C^{\times}$, there exists a finite-dimensional irreducible $\uqghat$-module $W^{(a)}_{m}(u)$ called the Kirillov-Reshetikhin module; see, for example, \cite{MR1745263,MR2254805} for a more detailed discussion. By restriction, we get a finite-dimensional $\uqg$-module $\res \Wam(u)$, which we simply denote by $\res \Wam$ as its isomorphism class is independent of $u$. Let $Q_m^{(a)}: =\chi(\res  W_m^{(a)})\in \Z[P]$ for $(a,m)\in I\times \Z_{\geq 0}$ and $\ncQ{a}{m}:=e^{-m\omega_a}\Qam$, which becomes a polynomial in $\Z[e^{-\alpha_j}]_{j\in I}$; see \cite{MR1745260, MR1810773}.

\begin{thm}[{\cite[Theorem 1.1]{MR1993360}} and {\cite[Theorem 3.4]{MR2254805}}]\label{nhmain}
Let $a\in I, m \geq 1$. Then
\begin{equation}\label{eq:Qsys}
(\Qam)^2 - Q^{(a)}_{m+1}Q^{(a)}_{m-1} = \prod_{b : C_{ab}< 0} \prod_{k=0}^{-C_{a b}-1}
Q^{(b)}_{\bigl \lfloor\frac{C_{b a}m - k}{C_{a b}}\bigr \rfloor},
\end{equation}
and as $m\to \infty$, $\ncQ{a}{m}$ converges to $\nc{a}$ as a formal power series in $\Z[[e^{-\alpha_j}]]_{j\in I}$.

\end{thm}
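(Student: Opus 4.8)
The statement bundles two facts --- the $Q$-system (\ref{eq:Qsys}) and the convergence of the normalized characters --- and I would prove them in that order, using the former to drive the latter.

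\emph{The $Q$-system.} The plan is to lift (\ref{eq:Qsys}) to the level of $q$-characters and then specialize. Recall that the $q$-character map $\chi_q$ of Frenkel--Reshetikhin is an injective ring homomorphism from the Grothendieck ring of finite-dimensional $\uqghat$-modules into a Laurent polynomial ring $\Z[Y_{i,u}^{\pm1}]$, refining the ordinary character in the sense that the specialization $Y_{i,u}\mapsto e^{\omega_i}$ sends $\chi_q(V)$ to $\chi(\res V)$. The module $W^{(a)}_m(u)$ is the simple module whose highest $\ell$-weight is the product of the $Y_{a,\cdot}$ along a $q$-string of length $m$. The crux is the $q$-character (Grothendieck-ring) form of the $Q$-system, an identity
\[
[W^{(a)}_m(u)]^2 = [W^{(a)}_{m+1}(u')]\,[W^{(a)}_{m-1}(u'')] + \prod_{b:C_{ab}<0}\;\prod_{k=0}^{-C_{ab}-1}[W^{(b)}_{n_{b,k}}(u_{b,k})],
\]
in which the $b$-factors are supported on $q$-strings whose lengths are exactly $n_{b,k}=\lfloor(C_{ba}m-k)/C_{ab}\rfloor$, the shifts being dictated by $t_a,t_b$. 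For simply-laced $\g$ this is Nakajima's theorem, proved through the geometry of graded quiver varieties and the positivity of the $t$-analogues of $q$-characters; for general $\g$ it is Hernandez's theorem, established by a direct analysis of the $q$-characters of KR modules (the Frenkel--Mukhin algorithm, the $T$-system, and a characterization of the simple constituents). Applying the specialization collapses each $[W^{(b)}_n(\cdot)]$ to $Q^{(b)}_n$ and the displayed identity drops to (\ref{eq:Qsys}). I expect this $q$-character identity, with its exact spectral parameters, to be the main obstacle: it is here that the non-simply-laced combinatorics of the floor functions is forced, and one needs control of the whole $q$-character of $W^{(a)}_m$, not merely its ordinary character.

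\emph{Convergence.} Set $f^{(a)}_m:=\ncQ{a}{m}=e^{-m\omega_a}Q^{(a)}_m\in\Z_{\geq 0}[e^{-\alpha_j}]_{j\in I}$; it has constant term $1$, and convergence in $\Z[[e^{-\alpha_j}]]_{j\in I}$ means that for each $\gamma$ in the positive cone the coefficient of $e^{-\gamma}$ in $f^{(a)}_m$ is eventually independent of $m$. Dividing (\ref{eq:Qsys}) by $e^{2m\omega_a}$ rewrites it as
\[
(f^{(a)}_m)^2 - f^{(a)}_{m+1}f^{(a)}_{m-1} = e^{-2m\omega_a}\prod_{b:C_{ab}<0}\;\prod_{k=0}^{-C_{ab}-1}Q^{(b)}_{n_{b,k}}.
\]
The key point is that the right-hand side is divisible by $e^{-m\alpha_a}$. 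Its highest weight is $-2m\omega_a+\sum_{b,k}n_{b,k}\omega_b$, and Hermite's identity $\sum_{k=0}^{p-1}\lfloor(y+k)/p\rfloor=\lfloor y\rfloor$ (with $y=-C_{ba}m$, $p=-C_{ab}$) together with $\alpha_a=\sum_bC_{ba}\omega_b$ gives $\sum_{b,k}n_{b,k}\omega_b=-m\sum_{b\neq a}C_{ba}\omega_b=2m\omega_a-m\alpha_a$, so the highest weight equals $-m\alpha_a$ and every monomial on the right has height at least $m$. Consequently, after truncating to monomials of height at most $N$, the right-hand side vanishes once $m>N$, so that $(f^{(a)}_m)^2$ and $f^{(a)}_{m+1}f^{(a)}_{m-1}$ agree up to height $N$. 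Arguing by induction on $N$ (the base case $N=0$ is the constancy of the constant term, which equals $1$) and comparing the homogeneous height-$N$ parts, using that the strictly-lower-height parts are already constant with leading term $1$, forces the height-$N$ part $c_m$ of $f^{(a)}_m$ to obey $c_{m+1}-c_m=c_m-c_{m-1}$ for all large $m$; i.e. $(c_m)$ is eventually arithmetic. Convergence is thus reduced to showing these progressions are constant.

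The one remaining obstacle is an a priori bound: the coefficients of $f^{(a)}_m$, namely the weight multiplicities $\dim(\res W^{(a)}_m)_{m\omega_a-\gamma}$ for fixed $\gamma$, must be bounded uniformly in $m$. Granting this, an eventually-arithmetic sequence of non-negative integers with a uniform upper bound has common difference $0$, hence is eventually constant, and convergence follows. I would secure the bound either from the monotonicity and stabilization of these multiplicities as the ``top'' of $W^{(a)}_m$ grows, or, more conceptually, by invoking the already-established convergence of the normalized $q$-characters from the asymptotic construction of prefundamental representations and then specializing, which bypasses the combinatorial estimate altogether.
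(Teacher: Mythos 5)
This theorem is not proved in the paper at all: it is imported from Nakajima and Hernandez, and the surrounding text records precisely the route you describe for the first half --- the cited authors establish the $T$-system at the level of $q$-characters, and (\ref{eq:Qsys}) is its restriction to ordinary characters. So your treatment of the $Q$-system is a correct account of the literature rather than an argument one could check; the heavy lifting sits entirely inside the cited theorems, as you yourself anticipate.

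The convergence half is where there is a genuine gap if your text is read as a proof. Your truncation argument from (\ref{eq:Qsys}) only yields that, for each fixed height $N$, the height-$N$ parts $c_m$ of $\ncQ{a}{m}$ satisfy $c_{m+1}-c_m=c_m-c_{m-1}$ for large $m$, and positivity of weight multiplicities only forces the common difference to be $\geq 0$; nothing in the $Q$-system rules out linear growth in $m$. The missing ingredient, a bound on $\dim\left(\res W^{(a)}_m\right)_{m\omega_a-\gamma}$ uniform in $m$, is not an innocuous technicality: your first proposed fix (``monotonicity and stabilization of these multiplicities as the top of $W^{(a)}_m$ grows'') is exactly the stabilization statement being proven, hence circular, and your second fix (invoking the convergence of normalized $q$-characters from the asymptotic construction of prefundamental representations) is a citation of the stronger, already-known form of the theorem --- which is what the paper itself does, so at that point the whole arithmetic-progression apparatus becomes redundant. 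Note also that in the literature convergence is not deduced from the $Q$-system: Hernandez proves the sharper statement, quoted in this paper as Proposition \ref{prop:Epoly}, that $\ncQ{a}{m+1}-\ncQ{a}{m}=e^{-(m+1)\alpha_{a}}\Eam$ with $\Eam$ having positive coefficients, by a direct analysis of $q$-characters; formal convergence in $\Z[[e^{-\alpha_j}]]_{j\in I}$ is immediate from that, with no multiplicity bound needed.
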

We call (\ref{eq:Qsys}) the $Q$-system. This is a weaker version of what they proved. Nakajima and Hernandez actually proved that the $q$-characters of KR modules satisfy the $T$-system whose restriction becomes the $Q$-system (\ref{eq:Qsys}), and the limit of normalized $q$-characters exists. 
%Since we do not use $q$-characters in this paper, we do not explain what they are in detail.

The following gives a more detailed information on how $\ncQ{a}{m}$ changes as polynomials as $m\to \infty$ :
\begin{prop}[{\cite[Lemma 5.8]{MR2254805}}]\label{prop:Epoly}
For each $(a,m)\in I\times \Z_{\geq 0}$, there exists a polynomial $\Eam\in \Z[e^{-\alpha_j}]_{j\in I}$ such that $\ncQ{a}{m+1}-\ncQ{a}{m}=e^{-(m+1)\alpha_{a}}\Eam$ and moreover, $\Eam$ has positive coefficients.
\end{prop}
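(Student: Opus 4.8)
The plan is to transfer the statement to the weight multiplicities of $\res\Wam$. Write
\[
\ncQ{a}{m}=\sum_{\beta}c_{m,\beta}\,e^{-\beta},\qquad c_{m,\beta}:=\dim(\res\Wam)_{m\omega_a-\beta},
\]
the sum running over $\beta\in\oplus_{j}\Z_{\geq 0}\alpha_j$; all $c_{m,\beta}\geq 0$ and $c_{m,0}=1$ since $m\omega_a$ is the highest weight. Put $D_m:=\ncQ{a}{m+1}-\ncQ{a}{m}$. It suffices to establish two properties of these multiplicities:
\begin{itemize}
\item[(S)] $c_{m+1,\beta}=c_{m,\beta}$ whenever $[\beta]_a\leq m$;
\item[(M)] $c_{m+1,\beta}\geq c_{m,\beta}$ for every $\beta$.
\end{itemize}
Granting these, (S) leaves only the terms with $[\beta]_a\geq m+1$ in $D_m$, so that $\beta-(m+1)\alpha_a\in\oplus_j\Z_{\geq 0}\alpha_j$ for each; hence $\Eam:=e^{(m+1)\alpha_a}D_m$ lies in $\Z[e^{-\alpha_j}]_{j\in I}$, and (M) makes its coefficients nonnegative. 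This is exactly the assertion.

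As a source of structure I would first put the $Q$-system (\ref{eq:Qsys}) into normalized form. Multiplying (\ref{eq:Qsys}) by $e^{-2m\omega_a}$ and using $Q^{(b)}_{n}=e^{n\omega_b}\ncQ{b}{n}$, the weight prefactor on the right collapses: Hermite's identity $\sum_{k=0}^{p-1}\lfloor(N+k)/p\rfloor=N$, applied with $p=-C_{ab}$ and $N=-C_{ba}m$, gives $\sum_{k}\lfloor(C_{ba}m-k)/C_{ab}\rfloor=-C_{ba}m$ for each neighbour $b$, whence
\[
(\ncQ{a}{m})^2-\ncQ{a}{m+1}\,\ncQ{a}{m-1}=e^{-m\alpha_a}\prod_{b:\,C_{ab}<0}\prod_{k=0}^{-C_{ab}-1}\ncQ{b}{n_{b,k}},
\]
with $n_{b,k}=\lfloor(C_{ba}m-k)/C_{ab}\rfloor$. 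Since each $\ncQ{b}{n}$ has nonnegative coefficients and constant term $1$, this yields at once the log-concavity bound $\ncQ{a}{m+1}\ncQ{a}{m-1}\leq(\ncQ{a}{m})^2$ (coefficientwise) and shows that the right-hand side has lowest $\alpha_a$-degree equal to $m$, with leading term $e^{-m\alpha_a}$. Working modulo terms of $\alpha_a$-degree $\geq m+1$ makes (S) plausible and suggests an induction on $m$ for the divisibility; the subtraction on the left, however, is what prevents this identity from also delivering the positivity (M).

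To prove (S) and (M) themselves I would pass to $q$-characters, where $[\beta]_a$ is the number of factors $A_{a,\cdot}^{-1}$ in a monomial and the highest $\ell$-weight of $\Wam$ carries exactly $m$ factors attached to the node $a$. Property (M) should come from a weight-preserving inclusion of the low-degree part of $\res\Wam$ into that of $\res W^{(a)}_{m+1}$ --- via the Kirillov--Reshetikhin crystals, or, in the simply-laced case, from Nakajima's quiver-variety description, in which the multiplicities are values of manifestly increasing $t$-analogues. Property (S) should come from the stability of the truncation of $\chi_q(\Wam)$ at a bounded number of factors $A_{a,\cdot}^{-1}$, which the Frenkel--Mukhin algorithm computes from the highest $\ell$-weight by a recursion independent of $m$ once $m$ is large. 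The hard part is the \emph{sharp} threshold in (S) --- that the $\alpha_a$-degree $d$ part of $\ncQ{a}{m}$ is already frozen as soon as $m\geq d$, not merely for $m$ large --- together with the uniform positivity (M): neither is a formal consequence of the $Q$-system or of the bare convergence in Theorem \ref{nhmain}, each encoding the fine monomial structure of Kirillov--Reshetikhin modules and, in the exceptional types, resting on Hernandez's $T$-system machinery rather than on a case-free argument.
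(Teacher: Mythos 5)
Your proposal is a plan, not a proof, and the gap sits exactly where the mathematical content lies. Note first what you are being compared against: the paper does not prove this proposition at all --- it is imported verbatim from Hernandez \cite{MR2254805} (Lemma 5.8 there), and the bracketed citation in the statement is the paper's entire ``proof.'' Your opening reduction is correct but carries no content: writing $\ncQ{a}{m}=\sum_{\beta}c_{m,\beta}e^{-\beta}$, the conjunction of your (S) and (M) is \emph{equivalent} to the assertion that $\ncQ{a}{m+1}-\ncQ{a}{m}=e^{-(m+1)\alpha_a}\Eam$ with $\Eam\in\Z[e^{-\alpha_j}]_{j\in I}$ having nonnegative coefficients, so after this step the proposition has merely been restated in the language of weight multiplicities.

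Everything after that is deferral. Your normalized $Q$-system identity is correct (the Hermite-identity computation is the same one the paper uses in Proposition \ref{prop:norR}), but, as you yourself concede, it cannot produce (M) because of the subtraction on its left-hand side, and it is also one power of $e^{-\alpha_a}$ short for (S): combining it with induction on $m$ and inverting $\ncQ{a}{m}$ (constant term $1$) in $\Z[[e^{-\alpha_j}]]_{j\in I}$ yields only $\ncQ{a}{m+1}-\ncQ{a}{m}\in e^{-m\alpha_a}\,\Z[[e^{-\alpha_j}]]_{j\in I}$, with no control of signs and no polynomiality of the quotient. The remaining suggestions --- KR crystals, quiver varieties, truncations of $q$-characters via the Frenkel--Mukhin algorithm --- are pointers rather than arguments: none is carried out, the quiver-variety route covers only simply-laced types, crystal-theoretic inclusions are not available uniformly in the exceptional cases this proposition covers, and the sharp threshold in (S) is explicitly labeled ``plausible'' rather than proven. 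These pointers do aim at the right machinery (Hernandez's proof is indeed a $q$-character/$T$-system analysis of dominant monomials), but since that analysis is precisely what is left undone, the proposal establishes nothing beyond what the citation already supplies; the missing piece is an actual argument for (S) and (M).
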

Let us consider $e^{\la},\, \la \in \h_{\R}^{*}$ as a real-valued function on $\h_{\R}^{*}$ defined by $\mu\mapsto e^{(\la,\mu)}$. Hence, we get $\ncQ{a}{m}(\la)\in \R$ for $\la\in \h_{\R}^{*}$. The argument in the proof of \cite[Proposition 5.9]{MR2254805} shows the following :
\begin{prop}\label{prop:ncfunc}
Let $\dom:=\{\la \in \h_{\R}^{*}: (\alpha_a,\la)>0\, \forall a\in I\}$ be the fundamental open chamber. There exists a non-empty open subset $\dom_0$ of $\dom$ (independent of $a\in I$) such that
for $\la \in \dom_0$, $\ncQ{a}{m}(\la)$ converges absolutely as $m\to \infty$.
\end{prop}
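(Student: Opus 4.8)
The plan is to convert the stated analytic convergence into a uniform boundedness statement, and then to read off the required exponential bound directly from the $Q$-system (\ref{eq:Qsys}).

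First I would fix $\la\in\dom$ and use Proposition \ref{prop:Epoly} in telescoped form:
\begin{equation*}
\ncQ{a}{m}(\la) = \ncQ{a}{0}(\la) + \sum_{k=0}^{m-1} e^{-(k+1)(\alpha_a,\la)}\,E_k^{(a)}(\la).
\end{equation*}
Since each $E_k^{(a)}$ has nonnegative coefficients and $(\beta,\la)\ge 0$ for every $\beta\in\bigoplus_{a\in I}\Z_{\ge 0}\alpha_a$, all summands are nonnegative; thus $\left(\ncQ{a}{m}(\la)\right)_m$ is nondecreasing and its limit is the sum of a nonnegative series. Consequently absolute convergence coincides with ordinary convergence, and the whole problem reduces to producing a nonempty open $\dom_0\subseteq\dom$, independent of $a$, on which these partial sums stay bounded as $m\to\infty$.

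The key estimate I would establish controls $E_k^{(a)}(\la)$ by its value at the origin. Because $E_k^{(a)}$ has nonnegative coefficients and $e^{-(\beta,\la)}\le 1$ for $\la\in\dom$, we get $E_k^{(a)}(\la)\le E_k^{(a)}(0)$, and evaluation at $0$ simply returns a sum of coefficients. Writing $d_m^{(a)}:=\Qam(0)=\dim\res\Wam$, a direct computation gives $E_k^{(a)}(0)=d_{k+1}^{(a)}-d_{k}^{(a)}\le d_{k+1}^{(a)}$. To bound the dimensions I would evaluate the $Q$-system (\ref{eq:Qsys}) at $0$: its right-hand side is a product of such dimensions, hence nonnegative, so $(d_m^{(a)})^2\ge d_{m+1}^{(a)}d_{m-1}^{(a)}$. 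Thus $(d_m^{(a)})_m$ is log-concave; since $d_0^{(a)}=1$, the ratios $d_{m+1}^{(a)}/d_m^{(a)}$ are nonincreasing and bounded by $d_1^{(a)}$, whence $d_m^{(a)}\le (d_1^{(a)})^m$.

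With this in hand I would set $D:=\max_{a\in I} d_1^{(a)}$ and
\begin{equation*}
\dom_0:=\{\la\in\dom : (\alpha_a,\la)>\log D \text{ for all } a\in I\},
\end{equation*}
which is open, independent of $a$, and nonempty since it contains $t\rho$ for all large $t$. For $\la\in\dom_0$ the tail is dominated by a convergent geometric series,
\begin{equation*}
\sum_{k=0}^{\infty} e^{-(k+1)(\alpha_a,\la)}E_k^{(a)}(\la)\le \sum_{k=0}^{\infty}\bigl(e^{-(\alpha_a,\la)}d_1^{(a)}\bigr)^{k+1}<\infty,
\end{equation*}
because $e^{-(\alpha_a,\la)}d_1^{(a)}<1$. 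This yields the boundedness, hence the convergence, for every $a$.

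I do not expect a single isolated difficulty so much as a need for care at the junctions. The genuinely substantive point is the passage from the purely formal convergence of Theorem \ref{nhmain} to honest numerical convergence; the bridge is the exponential bound $d_m^{(a)}\le (d_1^{(a)})^m$, which is where the log-concavity concealed in the $Q$-system does the essential work. The remaining care lies in (i) the inequality $E_k^{(a)}(\la)\le E_k^{(a)}(0)$, resting on the nonnegativity of coefficients in Proposition \ref{prop:Epoly}, and (ii) arranging that one region $\dom_0$ serves all nodes simultaneously, which is achieved by taking $D$ to be the maximum of the finitely many $d_1^{(a)}$.
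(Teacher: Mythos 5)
Your proof is correct, and it follows the same overall strategy as the argument the paper relies on: the paper does not reproduce a proof here but simply invokes the argument of Hernandez (\cite[Proposition 5.9]{MR2254805}), which likewise telescopes the differences $\ncQ{a}{m+1}-\ncQ{a}{m}=e^{-(m+1)\alpha_a}E^{(a)}_m$, uses positivity of the coefficients to reduce absolute convergence to boundedness, bounds the coefficient sums by dimensions of KR modules, and then wins by a geometric series on a region where each $(\alpha_a,\la)$ is large. The one genuine point of divergence is how you obtain the exponential bound $\dim \res W^{(a)}_m\leq (\dim \res W^{(a)}_1)^m$: Hernandez's bound ultimately rests on standard representation-theoretic facts (KR modules sit inside tensor powers of $W^{(a)}_1$), whereas you extract it from the $Q$-system itself, by specializing (\ref{eq:Qsys}) at $0$ to get $(d^{(a)}_m)^2\geq d^{(a)}_{m+1}d^{(a)}_{m-1}$ and hence log-concavity of the dimension sequence with $d^{(a)}_0=1$. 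This is a nice touch: it makes the proposition self-contained relative to the results actually quoted in the paper (Theorem \ref{nhmain} and Proposition \ref{prop:Epoly}), at no cost in length, and all the junction points you flag (nonnegativity of $(\beta,\la)$ on $\dom$, the identity $E^{(a)}_k(0)=d^{(a)}_{k+1}-d^{(a)}_k$, and taking the maximum of the finitely many $d^{(a)}_1$ so that one $\dom_0$ serves all nodes) check out.
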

Hence, we can regard $\nc{a}$ as a real-valued functions on $\dom_0$, defined as the limit of $\ncQ{a}{m}$, not just as a formal power series.
\subsection{Analogues of $\nc{a}$}\label{ss:aNC}
In \cite{Lee2016}, we studied a linear recurrence relation with constant coefficients that the sequence $(\Qam)_{m=0}^{\infty}$ satisfies. We can summarize its main properties in terms of its generating function $\mathcal{Q}^{(a)}(t) : = \sum_{m=0}^{\infty}\Qam t^m$ as follows :
\begin{theorem}[{\cite[Theorem 1.1]{Lee2016}}]\label{thm:linQ}
Let $\g$ be a simple Lie algebra. Assume that $a\in I$ belongs to one of the following cases :
\begin{itemize}
\item $a\in I$ is arbitrary when $\g$ is of classical type or type $E_6$, $F_4$ or $G_2$, 
\item $a\in \{1,2,3,5,6,7\}$ when $\g$ is of type $E_7$,
\item $a\in \{1,2,6,7\}$ when $\g$ is of type $E_8$.
\end{itemize}
There exist $W$-invariant finite subsets $\La_a$ and $\La'_a$ of $P$ with the following properties :
\begin{enumerate}[label=(\roman*), ref=(\roman*)]
\item If we set $D^{(a)}(t) :  = \prod_{\la\in \La_a}(1-e^{\la}t)\prod_{\la\in \La_a'}(1-e^{\la}t^{t_a})$, then 
\begin{equation}%\label{eqn:Qgen}
N^{(a)}(t): = \mathcal{Q}^{(a)}(t)D^{(a)}(t)
\end{equation}
is a polynomial in $t$ with coefficients in $\Z[P]$ with $\deg N^{(a)}<\deg D^{(a)}$.
\item \label{thm:lineq0} $t_a\La_{a}\cap \La'_{a}=\emptyset$, where $t_a\La_{a}=\{t_a \la\mid \la \in \La_a\}$.
\item $\omega_a\in \La_a$.
\item \label{thm:lineq1} For each $\la\in \La_a$, $\omega_a\geq \la$ .
\item \label{thm:lineq2} For each $\la\in \La_a'$, $t_a\omega_a\geq \la$.
\end{enumerate}
\end{theorem}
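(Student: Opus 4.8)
The plan is to compute the generating series $\mathcal{Q}^{(a)}(t)=\sum_{m\geq 0}\Qam t^{m}$ in closed form and then to read off $\La_a$ and $\La_a'$ as its two families of poles. I would start from the branching $\res\Wam\cong\bigoplus_{\la}c^{(a)}_{m,\la}L(\la)$ into $\g$-irreducibles, so that $\mathcal{Q}^{(a)}(t)=\sum_{\la}R^{(a)}_\la(t)\,\chi(L(\la))$ with $R^{(a)}_\la(t):=\sum_{m\geq 0}c^{(a)}_{m,\la}t^{m}$. Writing $A(\mu)=\sum_{w\in W}(-1)^{\ell(w)}e^{w\mu}$, so that $\chi(L(\la))=A(\la+\rho)/A(\rho)$ with $A(\rho)$ the Weyl denominator, the cleanest model is a node whose restriction is irreducible, $\res\Wam\cong L(m\omega_a)$, which is exactly what happens for every node in type $A$. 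There the single geometric sum and the Weyl character formula give
\[
\mathcal{Q}^{(a)}(t)=\sum_{m\geq 0}\chi(L(m\omega_a))\,t^{m}=\frac{1}{A(\rho)}\sum_{w\in W}(-1)^{\ell(w)}\frac{e^{w\rho}}{1-e^{w\omega_a}t},
\]
whose poles sit exactly at the frequencies $W\omega_a$; here $\La_a=W\omega_a$ and $\La_a'=\emptyset$, and $N^{(a)}=\mathcal{Q}^{(a)}(t)\,D^{(a)}(t)$ is automatically a polynomial with coefficients in $\Z[P]$ since $\mathcal{Q}^{(a)}(t)\in\Z[P][[t]]$.

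In general $\res\Wam$ has several constituents and multiplicities larger than one, and the core of the proof is to show that the functions $R^{(a)}_\la(t)$ assemble so that $\mathcal{Q}^{(a)}(t)$ stays rational with a denominator of the stated product shape. The step-$1$ factors arise, through the Weyl character formula, from the $W$-orbits of the directions in which the highest weights of the constituents travel as $m$ grows; since the top constituent is always $L(m\omega_a)$ with multiplicity one, the orbit $W\omega_a$ occurs and in particular $\omega_a\in\La_a$. The step-$t_a$ factors $(1-e^{\la}t^{t_a})$ encode the period-$t_a$ structure that the branching multiplicities acquire on the short nodes, where constituents appear in arithmetic progressions of step $t_a$ in $m$; their effective per-step eigenvalue $e^{\la/t_a}$ is again bounded by the dominant weight $e^{\omega_a}$. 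Making all of this precise requires an explicit description of $c^{(a)}_{m,\la}$, which I would extract from the tableaux combinatorics in the classical types and from the fermionic formulas furnished by the (now established) Kirillov--Reshetikhin conjecture in the exceptional types; the convergence of Theorem \ref{nhmain}, with the rate controlled by Proposition \ref{prop:Epoly}, guides the analysis by pinning down the dominant $e^{\omega_a}t$-behaviour. I expect this verification to be the main obstacle, and it is the source of the restrictions in the statement: for the excluded nodes of $E_7$ and $E_8$ the branching does not organize into a pattern clean enough for the series to close up into a finite product.

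Granting the rational form, the remaining assertions follow with little further work. Each $\Qam$ lies in $\Z[P]^{W}$, so $\mathcal{Q}^{(a)}(t)$ is $W$-invariant and $W$ must permute the factors within each of the two ($t$ versus $t^{t_a}$) families; taking $\La_a$ and $\La_a'$ to be the resulting frequency sets yields (i) together with their $W$-invariance. Property (iii) is the highest-weight observation just made. The dominance bounds (iv) and (v) hold because every weight of $\res\Wam$ satisfies $m\omega_a\succeq\la$, so each step-$1$ frequency $\la$ obeys $\omega_a\geq\la$ and each step-$t_a$ frequency obeys $t_a\omega_a\geq\la$; this is simply the maximality of a dominant weight in its $W$-orbit. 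For the degree bound, note that in partial fractions $\mathcal{Q}^{(a)}(t)$ is a finite combination of terms $1/(1-e^{\la}t)$ and $1/(1-e^{\la}t^{t_a})$, each of which tends to $0$ as $t\to\infty$, whence $N^{(a)}=\mathcal{Q}^{(a)}D^{(a)}$ satisfies $\deg N^{(a)}<\deg D^{(a)}$. Finally, the transversality (ii), $t_a\La_a\cap\La_a'=\emptyset$, is a separation between the two families guaranteeing that the step-$1$ and step-$t_a$ parts of $D^{(a)}(t)$ share no common root; I would check it by direct inspection once $\La_a$ and $\La_a'$ have been computed.
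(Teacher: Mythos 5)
This statement is not proved in the paper at all: it is imported verbatim from \cite{Lee2016}, so the only meaningful comparison is with the strategy of that reference, which does proceed, as you propose, by feeding explicit decompositions of $\res \Wam$ into $\g$-irreducibles (the polyhedral formulas of Subsection \ref{ss:polyform}) into the Weyl character formula and summing geometric series. Your type-$A$ computation is correct and is exactly the model case. But your proposal has a genuine gap precisely at the point you yourself flag as ``the main obstacle'': for every case other than type $A$ --- the non-simply-laced types, where the step-$t_a$ factors $(1-e^{\la}t^{t_a})$ must actually be produced, and the exceptional types, where the multiplicities $c^{(a)}_{m,\la}$ exceed one --- you do not exhibit the decompositions, do not carry out the summation, and do not show that the resulting denominator has the stated product shape. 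That is not a routine verification to be delegated; it is the entire content of the theorem, and it is exactly where the restrictions to $a\in \{1,2,3,5,6,7\}$ in type $E_7$ and $a\in \{1,2,6,7\}$ in type $E_8$ originate (these are the nodes for which proved decomposition formulas are available). As written, the proposal proves only the type-$A$ case.

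Moreover, several claims you present as ``following with little further work'' are circular or incomplete pending that missing step. For the degree bound you assert that $\mathcal{Q}^{(a)}(t)$ is, in partial fractions, a combination of terms $1/(1-e^{\la}t^{l})$ with no polynomial part; but the absence of a polynomial part is \emph{equivalent} to $\deg N^{(a)}<\deg D^{(a)}$, which is what is to be proved --- it follows only from the explicit construction (a finite sum of geometric series has no polynomial part), i.e.\ from the step you skipped. For the bounds \ref{thm:lineq1} and \ref{thm:lineq2}, the elements of $\La_a$ and $\La'_a$ are pole frequencies of a generating function, not weights of any single module, so ``every weight of $\res \Wam$ is $\preceq m\omega_a$'' does not directly bound them; one needs either the explicit construction or a growth argument (linear independence in $m$ of the exponentials $\zeta^m e^{m\la/l}$, together with boundedness of the weights occurring in the coefficients $C(\mathcal{Q}^{(a)},\la,\zeta,l)$), and your appeal to ``maximality of a dominant weight in its $W$-orbit'' covers only the orbit $W\omega_a$, not the other orbits that $\La_a$ contains. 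Finally, property \ref{thm:lineq0} --- the one property this paper actually relies on, since it forces $D^{(a)}(t)$ to have simple roots --- is deferred to ``direct inspection once $\La_a$ and $\La'_a$ have been computed,'' that is, to the computation that was never performed.
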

From now on we fix $\La_a$ and $\La'_a$ as in the Appendix  of \cite{Lee2016}, which is expected to be the minimal set satisfying the conditions of the above. When $\g$ is simply-laced, $\La_a$ is simply the set of weights of the fundamental representation $L(\omega_a)$. 
Theorem \ref{thm:linQ} was originally motivated to understand a certain periodicity phenomenon related to KR modules \cite{MR3628223}. In particular, \ref{thm:lineq0} shows that $D^{(a)}(t)$ has only simple roots, which is the underlying reason for that periodicity. This fact also turns out to be important in this paper as we discuss below.

Assume that a rational function $\mathcal{R}(t)\in \K(t)$ is of the form
\begin{equation}\label{eqn:Rgen}
\mathcal{R}(t) = \frac{N(t)}{D(t)}, \qquad N,D \in \K[t], \qquad \deg N<\deg D, \qquad D(t) = \prod_{(\la,l)\in \La}(1-e^{\la}t^{l})
\end{equation}
for a finite subset $\La$ of $P\times \Z_{>0}$ such that $D(t)$ has only simple roots. Let $l_0\in \Z_{>0}$ be the least common multiple of $\{l\in \Z_{>0} :(\la, l)\in \La\}$. Then the coefficients of its power series expansion $\mathcal{R}(t) = \sum_{m=0}^{\infty}R_{m} t^m$ takes the form
\begin{equation}\label{eqn:Rm}
R_{m} =\sum_{(\la,\zeta,l)\in P\times \C^{\times} \times \Z_{>0}} C(\mathcal{R},\la,\zeta,l)\zeta^m e^{m \la/l}
\end{equation}
for some $C(\mathcal{R},\la,\zeta,l)\in \K(e^{\omega_j/{l_0}})_{j\in I}$, which vanishes unless $\la\in \La$ and $\zeta^l = 1$. Here $\K(e^{\omega_j/{l_0}})_{j\in I}$ denotes the field extension of $\K$ obtained by adjoining $e^{\omega_j/{l_0}},\, j\in I$. The condition that $D(t)$ has only simple roots guarantees that each $C(\mathcal{R},\la,\zeta,l)$ is independent of $m$. Assume further that $R_m$ is $W$-invariant for all $m$. Here $W$ acts on $\K(e^{\omega_j/{l_0}})_{j\in I}$ by $w(e^{\omega_j/{l_0}}):=e^{w(\omega_j/{l_0})}$ for $w\in W$. In such a case, of course, $\mathcal{R}$ is $W$-invariant, and thus
\begin{equation}\label{eqn:RWsym}
w\left(C(\mathcal{R},\la,\zeta,l)\right)= C(\mathcal{R},w(\la),\zeta,l)
\end{equation}
for each $w\in W$; we will use this property in Proposition \ref{thm:algrel}.
% This follows from the simple observation that
% $$
% w(R_{m}) =\sum_{(\la,\zeta,l)\in P\times \C \times \Z_{>0}} w\left(C(\mathcal{R},\la,\zeta,l)\right)\zeta^m e^{m w(\la)/l}
% $$
% $$
% 1-e^{\la}t^{l}= \prod_{\zeta:\zeta^l=1}(1-\zeta e^{\la/l}t)
% $$
% This is equivalent to
% $$
% \sum_{m}c_{\la}\zeta^m e^{m\la/l}t^m  = \frac{c_{\la}}{1-\zeta e^{\la/l}t}.
% $$
%Assume further that both $N(t)$ and $D(t)$ are $W$-invariant. 
\begin{definition}\label{def:anc}
Assume that $a\in I$ satisfies the assumption of Theorem \ref{thm:linQ}. From the above discussion there exists $C(\mathcal{Q}^{(a)},\la,\zeta,l)\in \K(e^{\omega_j/{t_a}})_{j\in I}$ for each $(\la,\zeta,l)\in P\times \C^{\times} \times \Z_{>0}$ such that
\begin{equation}\label{eq:QandC0}
\Qam =\sum_{(\la,\zeta,l)} C(\mathcal{Q}^{(a)},\la,\zeta,l)\zeta^m e^{m \la/l}, \qquad m\in \Z_{\geq 0},
\end{equation}
which vanishes unless either
\begin{itemize}
\item $(\la,\zeta,l)=(\la,1,1)$ with $\la\in \La_a$; or,
\item $(\la,\zeta,l)=(\la,\zeta,t_a)$ with $\la\in \La_a'$ and $\zeta^{t_a}=1$. 
\end{itemize}
Let $C^{(a)}_{\la}:=C(\mathcal{Q}^{(a)},\la,1,1)$ for $\la\in \La_a$ and $C^{(a)}_{\la,\zeta}:=C(\mathcal{Q}^{(a)},\la,\zeta,t_a)$ for $\la\in \La_a'$ and $\zeta^{t_a}=1$. We also put $\ncC{a}:=C^{(a)}_{\omega_a}$ to emphasize its close connection with $\nc{a}$; see Proposition \ref{prop:fixdomain}.
We can rewrite (\ref{eq:QandC0}) as
\begin{equation}\label{eq:QandC}
\Qam =\sum_{\la\in \La_a} C^{(a)}_{\la}e^{m \la}+\sum_{\la\in \La_a'}\sum_{\zeta : \zeta^{t_a}=1} C^{(a)}_{\la,\zeta}\zeta^m e^{m \la/t_a}.
\end{equation}
\end{definition}
\begin{remark}
Whenever we mention $C(\mathcal{Q}^{(a)},\la,\zeta,l)$, we need the assumption that Theorem \ref{thm:linQ} holds for $a\in I$. Otherwise, $C(\mathcal{Q}^{(a)},\la,\zeta,l)$ does not make any sense in contrast to $\nc{a}$, which is defined for any $a\in I$.
\end{remark}
\begin{prop}\label{prop:Cdomain}
Let $\ell_a$ be the cardinality of $t_a\La_{a}\cup \La'_{a}$ and $\la_i,\, 1\leq i\leq \ell_a$ be its distinct elements, i.e., $t_a\La_{a}\cup \La'_{a} = \{\la_1,\dots, \la_{\ell_a}\}$. Fix an integer $0\leq k\leq t_a-1$.
For $\la\in \La_a$,
$$
C^{(a)}_{\la}(e^{k\la})\prod_{1\leq i<j\leq \ell_a}(e^{\la_j}-e^{\la_i})\in \Z[P],
$$
and for $\la\in \La'_a$,
$$
\left(\sum_{\zeta :\zeta^{t_a}=1}C^{(a)}_{\la,\zeta}(\zeta e^{\la/t_a})^k\right) \prod_{1\leq i<j\leq \ell_a}(e^{\la_j}-e^{\la_i})\in \Z[P].
$$
\end{prop}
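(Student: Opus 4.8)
The plan is to reduce everything to a single application of Cramer's rule by \emph{decimating} the sequence $(\Qam)_{m\ge0}$ along the fixed residue class $k$ modulo $t_a$. The point is that, after this decimation, the two families of quantities whose integrality we must prove become the coefficients of one genuine $\ell_a$-term exponential expansion, whose growth rates are exactly the $\ell_a$ distinct monomials $e^{\la_i}$; Cramer's rule then produces the Vandermonde determinant $\prod_{i<j}(e^{\la_j}-e^{\la_i})$ as the common denominator.

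Concretely, I would fix $0\le k\le t_a-1$ and substitute $m=t_an+k$ into (\ref{eq:QandC}). Since $\zeta^{t_a}=1$ for every $\zeta$ occurring in the second sum, one has $\zeta^{t_an+k}=\zeta^{k}$ and $e^{(t_an+k)\la/t_a}=e^{k\la/t_a}(e^{\la})^{n}$, while $e^{(t_an+k)\la}=e^{k\la}(e^{t_a\la})^{n}$. This yields
\begin{equation*}
Q^{(a)}_{t_an+k}=\sum_{\la\in\La_a}\bigl(C^{(a)}_{\la}e^{k\la}\bigr)(e^{t_a\la})^{n}+\sum_{\la\in\La'_a}\Bigl(\sum_{\zeta:\zeta^{t_a}=1}C^{(a)}_{\la,\zeta}(\zeta e^{\la/t_a})^{k}\Bigr)(e^{\la})^{n},\qquad n\ge0.
\end{equation*}
The growth rates appearing here are precisely $e^{t_a\la}$ for $\la\in\La_a$ and $e^{\la}$ for $\la\in\La'_a$, that is, the elements $e^{\la_1},\dots,e^{\la_{\ell_a}}$ with $\la_i$ ranging over $t_a\La_a\cup\La'_a$; by condition \ref{thm:lineq0} these $\ell_a$ weights are pairwise distinct, so the $e^{\la_i}$ are pairwise distinct monomials of $\Z[P]$. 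The two bracketed coefficients are exactly the expressions $C^{(a)}_{\la}(e^{k\la})$ and $\sum_{\zeta}C^{(a)}_{\la,\zeta}(\zeta e^{\la/t_a})^{k}$ whose integrality is asserted.

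Next I would extract from the displayed identity the $\ell_a\times\ell_a$ linear system obtained by letting $n=0,1,\dots,\ell_a-1$. Its coefficient matrix is the Vandermonde matrix $\bigl(e^{n\la_i}\bigr)_{0\le n\le \ell_a-1,\;1\le i\le \ell_a}$, with determinant $\prod_{1\le i<j\le\ell_a}(e^{\la_j}-e^{\la_i})$, which is a nonzero element of the integral domain $\Z[P]$ precisely because the $e^{\la_i}$ are distinct. Since the expansion holds for all $n$ and the $e^{\la_i}$ are distinct, the bracketed coefficients are the \emph{unique} solution of this system, so Cramer's rule expresses each of them as a ratio whose denominator is exactly this Vandermonde determinant.

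Finally, multiplying such a coefficient by $\prod_{1\le i<j\le\ell_a}(e^{\la_j}-e^{\la_i})$ clears the denominator and leaves the corresponding cofactor determinant. The entries of that determinant are either powers $e^{n\la_i}\in\Z[P]$ (note $\la_i\in P$) or values $Q^{(a)}_{t_an+k}\in\Z[P]$, so the determinant lies in $\Z[P]$, which is the desired conclusion for both $\la\in\La_a$ and $\la\in\La'_a$ at once. The only step requiring real care is the decimation identity together with the uniqueness of the coefficients; both hinge on the distinctness of the $e^{\la_i}$, i.e. on condition \ref{thm:lineq0}, after which the argument is a routine determinant computation over the integral domain $\Z[P]$.
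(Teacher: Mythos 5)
Your proof is correct and follows essentially the same route as the paper: specialize (\ref{eq:QandC}) at $m=k,t_a+k,\dots,(\ell_a-1)t_a+k$, recognize the resulting square system as Vandermonde in the distinct monomials $e^{\la_i}$, $\la_i\in t_a\La_a\cup\La'_a$, and apply Cramer's rule so that multiplying by the Vandermonde determinant leaves a cofactor determinant with entries in $\Z[P]$. Your explicit appeal to condition \ref{thm:lineq0} to justify that the system is square with pairwise distinct nodes is a detail the paper leaves implicit, but it is not a different argument.
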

\begin{proof}
Consider (\ref{eq:QandC}) for $m=k,t_a+k,2t_a+k,\dots, (\ell_a-1)t_a+k$, i.e. the following system of $\ell_a$ linear equations :
$$
\begin{cases}
\begin{aligned}
\sum_{\la\in \La_a} C^{(a)}_{\la}(e^{k\la})(e^{t_a \la})^{0} & +\sum_{\la\in \La'_a} \left(\sum_{\zeta :\zeta^{t_a}=1}C^{(a)}_{\la,\zeta}(\zeta e^{\la/t_a})^k\right)(e^{\la})^{0} &  = & \Q{a}{0+k} \\
\sum_{\la\in \La_a} C^{(a)}_{\la}(e^{k\la})(e^{t_a \la})^{1}& +\sum_{\la\in \La'_a} \left(\sum_{\zeta :\zeta^{t_a}=1}C^{(a)}_{\la,\zeta}(\zeta e^{\la/t_a})^k\right) (e^{\la})^{1} &  = & \Q{a}{t_a+k} \\
\vdots \\
\sum_{\la\in \La_a} C^{(a)}_{\la}(e^{k\la})(e^{t_a \la})^{\ell_a-1}& +\sum_{\la\in \La'_a} \left(\sum_{\zeta :\zeta^{t_a}=1}C^{(a)}_{\la,\zeta}(\zeta e^{\la/t_a})^k\right) (e^{\la})^{\ell_a-1} &  = &\Q{a}{(\ell_a-1)t_a+k} \\
\end{aligned}
\end{cases}
$$
Since $e^{\la}\, (\la \in P)$ and $\Qam$ are in $\Z[P]$, Cramer's rule and Vandermonde determinant give 
$$
C^{(a)}_{\la}(e^{k\la}) \prod_{1\leq i<j\leq \ell_a}(e^{\la_j}-e^{\la_i})\in \Z[P],\, \la \in \La_a,
$$
and 
$$
\left(\sum_{\zeta :\zeta^{t_a}=1}C^{(a)}_{\la,\zeta}(\zeta e^{\la/t_a})^k\right)\prod_{1\leq i<j\leq \ell_a}(e^{\la_j}-e^{\la_i})\in \Z[P],\, \la \in \La'_a.
$$
\end{proof}
Let us now define a non-empty open subset of $\h_{\R}^{*}$ on which all $\nc{a}$ and $C(\mathcal{Q}^{(a)},\la,\zeta,l)$ 
can be regarded as well-defined functions. From Proposition \ref{prop:Cdomain}, it is necessary to consider  
the set $H_{\la_i,\la_j}:=\{\mu \in \h_{\R}^{*}: (\la_i-\la_j,\mu)=0\}$ for distinct $\la_i,\la_j\in t_a\La_{a}\cup \La'_{a}$. Note that $H_{\la_i,\la_j}$ is a hyperplane in $\h_{\R}^{*}$ passing through $0$. Consider the union $H_{a}:=\cup_{(\la_i, \la_j)} H_{\la_i,\la_j}$ of all such hyperplanes.
\begin{definition}\label{def:dom1}
Let $\dom_0$ be the non-empty open set in Proposition \ref{prop:ncfunc}. We define $\dom_1:= \dom_0\backslash (\cup_{a\in I} H_{a})$. Since $\cup_{a\in I} H_{a}$ is a finite union of hyperplanes, $\dom_1$ is a non-empty open set of $\h_{\R}^{*}$.
\end{definition}
\begin{cor}\label{cor:Qsimpleform}
As functions on $\dom_1$, $\nc{a}$ and $C(\mathcal{Q}^{(a)},\la,\zeta,l)$ are well-defined.
\end{cor}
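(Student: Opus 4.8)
The statement has two parts, and the part concerning $\nc{a}$ is immediate: Proposition \ref{prop:ncfunc} already exhibits $\nc{a}$ as a well-defined real-valued function on $\dom_0$, and since $\dom_1\subseteq \dom_0$ by Definition \ref{def:dom1}, restriction gives a well-defined function on $\dom_1$. So the actual content is the claim that each $C(\mathcal{Q}^{(a)},\la,\zeta,l)$, which a priori lives only in the field $\K(e^{\omega_j/t_a})_{j\in I}$, evaluates to a genuine number at every point of $\dom_1$; equivalently, that none of these rational functions has a pole on $\dom_1$.

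The plan is to read off the possible poles directly from Proposition \ref{prop:Cdomain} and check that they all lie on the hyperplanes excluded in passing from $\dom_0$ to $\dom_1$. Only the coefficients $C^{(a)}_{\la}$ ($\la\in\La_a$) and $C^{(a)}_{\la,\zeta}$ ($\la\in\La'_a$, $\zeta^{t_a}=1$) can be nonzero, so it suffices to treat these two families; the remaining $C(\mathcal{Q}^{(a)},\la,\zeta,l)$ vanish identically and are trivially well-defined. Write $D:=\prod_{1\leq i<j\leq \ell_a}(e^{\la_j}-e^{\la_i})$, where $\{\la_1,\dots,\la_{\ell_a}\}=t_a\La_{a}\cup\La'_a$. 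Evaluated on $\h_{\R}^{*}$, the factor $e^{\la_j}-e^{\la_i}$ vanishes at $\mu$ exactly when $(\la_j-\la_i,\mu)=0$, i.e. on $H_{\la_i,\la_j}$; hence $D(\mu)\neq 0$ precisely off $H_{a}=\cup_{(\la_i,\la_j)}H_{\la_i,\la_j}$, and in particular $D(\mu)\neq 0$ for every $\mu\in\dom_1$.

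For $\la\in\La_a$, taking $k=0$ in Proposition \ref{prop:Cdomain} gives $C^{(a)}_{\la}D\in\Z[P]$, so $C^{(a)}_{\la}=P_{\la}/D$ for some $P_{\la}\in\Z[P]$; since elements of $\Z[P]$ are defined at every point of $\h_{\R}^{*}$ and $D$ is nonvanishing on $\dom_1$, the function $C^{(a)}_{\la}$ is well-defined there. For $\la\in\La'_a$ one must first disentangle the individual $C^{(a)}_{\la,\zeta}$ from the sums in Proposition \ref{prop:Cdomain}. Setting $x_{\zeta}:=\zeta e^{\la/t_a}$, the $t_a$ quantities $S_k:=\sum_{\zeta:\zeta^{t_a}=1}C^{(a)}_{\la,\zeta}x_{\zeta}^{k}$ satisfy $S_kD\in\Z[P]$ for $k=0,\dots,t_a-1$, so each $S_k$ is well-defined on $\dom_1$; and the linear system relating $(S_k)_k$ to $(C^{(a)}_{\la,\zeta})_{\zeta}$ is Vandermonde in the $x_{\zeta}$, with determinant a nonzero scalar multiple of a power of $e^{\la/t_a}$. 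The key point is that the $x_{\zeta}$ are pairwise distinct because $e^{\la/t_a}$ is a unit (its value $e^{(\la/t_a,\mu)}$ is a positive real), so this determinant never vanishes; inverting the system expresses each $C^{(a)}_{\la,\zeta}$ as a combination of the $S_k$ with coefficients defined at every point, hence as a well-defined function on $\dom_1$.

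I expect the only genuine subtlety to be this last disentangling step for the $\La'_a$ coefficients, where the twist by $t_a$-th roots of unity and the fractional exponent $e^{\la/t_a}$ enter; everything else is a direct application of Proposition \ref{prop:Cdomain} together with the defining property of $\dom_1$ as the complement of the $H_{a}$.
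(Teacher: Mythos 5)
Your proof is correct and follows essentially the same route as the paper: Proposition \ref{prop:ncfunc} handles $\nc{a}$, Proposition \ref{prop:Cdomain} with $k=0$ handles the $C^{(a)}_{\la}$, and the $C^{(a)}_{\la,\zeta}$ are recovered from the sums $S_k$ by inverting the Vandermonde system in $\zeta e^{\la/t_a}$, whose determinant is a nonzero multiple of a power of $e^{\la/t_a}$ and hence nonvanishing. Your explicit observation that the factors $e^{\la_j}-e^{\la_i}$ vanish exactly on the hyperplanes $H_{\la_i,\la_j}$ excluded from $\dom_1$ is precisely the point the paper builds into Definition \ref{def:dom1}.
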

\begin{proof}
For $\nc{a}$, this is a consequence of Proposition \ref{prop:ncfunc}. Now it is enough to consider non-zero $C(\mathcal{Q}^{(a)},\la,\zeta,l)$, namely, $C^{(a)}_{\la}$ and $C^{(a)}_{\la,\zeta}$. Applying Proposition \ref{prop:Cdomain} with $k=0$, we have $C^{(a)}_{\la}\prod_{1\leq i<j\leq \ell_a}(e^{\la_j}-e^{\la_i})\in \Z[P]$, and thus $C^{(a)}_{\la}$ is well-defined on $\dom_1$.
Finally, let $d_k:=\sum_{\zeta :\zeta^{t_a}=1}C^{(a)}_{\la,\zeta}(\zeta e^{\la/t_a})^k,\, k=0,\dots,t_a-1$, i.e.,
$$
\begin{bmatrix}
 (\zeta _0 e^{\frac{\lambda }{t_a}})^{0} & (\zeta _1 e^{\frac{\lambda }{t_a}})^{0} & \dots & (\zeta _{t_a-1} e^{\frac{\lambda }{t_a}})^{0} \\
 (\zeta _0 e^{\frac{\lambda }{t_a}})^{1} & (\zeta _1 e^{\frac{\lambda }{t_a}})^{1} & \dots & (\zeta _{t_a-1} e^{\frac{\lambda }{t_a}})^{1} \\
\vdots & \vdots & \vdots & \ddots &\vdots \\
 (\zeta _0 e^{\frac{\lambda }{t_a}})^{t_a-1} & (\zeta _1 e^{\frac{\lambda }{t_a}})^{t_a-1} & \dots & (\zeta _{t_a-1} e^{\frac{\lambda }{t_a}})^{t_a-1} \\
\end{bmatrix}
\begin{bmatrix}
C_{\la,\zeta_0} \\
C_{\la,\zeta_1} \\
C_{\la,\zeta_2} \\
\vdots \\
C_{\la,\zeta_{t_a-1}}
\end{bmatrix}
=
\begin{bmatrix}
d_0 \\
d_1 \\
d_2 \\
\vdots \\
d_{t_a-1}
\end{bmatrix}
$$
where $\zeta_j = e^{2\pi i j/t_a}\in \C^{\times}$. By Proposition \ref{prop:Cdomain}, $d_j\prod_{1\leq i<j\leq \ell_a}(e^{\la_j}-e^{\la_i})\in \Z[P]$. The determinant of the square matrix above is $e^{\la}$ multiplied by a non-zero complex number. By inverting the matrix, we see that $C^{(a)}_{\la,\zeta}$ is well-defined on $\dom_1$ as each $d_k$ is well-defined on $\dom_1$ by Proposition \ref{prop:Cdomain}.
\end{proof}
\begin{prop}\label{prop:fixdomain}
As functions on $\dom_1$, $\ncC{a}=\nc{a}$, and they assume only positive real values.
\end{prop}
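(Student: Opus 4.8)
The plan is to establish the two assertions separately. Throughout we assume that $a$ satisfies the hypothesis of Theorem~\ref{thm:linQ}, so that $\ncC{a}=C^{(a)}_{\omega_a}$ is defined. First I would prove the identity $\nc{a}=\ncC{a}$ on $\dom_1$ by extracting the dominant exponential term from the closed form (\ref{eq:QandC}); second I would prove positivity of the common value using the monotonicity encoded in Proposition~\ref{prop:Epoly}.

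For the identity, multiply (\ref{eq:QandC}) by $e^{-m\omega_a}$ and evaluate at a point $\mu\in\dom_1$, where every coefficient function is defined by Corollary~\ref{cor:Qsimpleform}:
$$
\ncQ{a}{m}(\mu)=\sum_{\la\in\La_a}C^{(a)}_{\la}(\mu)\,e^{m(\la-\omega_a,\mu)}+\sum_{\la\in\La'_a}\sum_{\zeta:\zeta^{t_a}=1}C^{(a)}_{\la,\zeta}(\mu)\,\zeta^m e^{m(\la/t_a-\omega_a,\mu)}.
$$
The crux is that $\omega_a$ is the strictly dominant exponent. Since $\dom_1\subseteq\dom_0\subseteq\dom$, we have $(\alpha_b,\mu)>0$ for all $b$. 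For $\la\in\La_a$, condition \ref{thm:lineq1} gives $\omega_a-\la\in\oplus_{b\in I}\R_{\geq0}\alpha_b$, which is nonzero unless $\la=\omega_a$; hence $(\la-\omega_a,\mu)<0$ and the corresponding exponential tends to $0$. For $\la\in\La'_a$, condition \ref{thm:lineq2} gives $t_a\omega_a-\la\in\oplus_{b\in I}\R_{\geq0}\alpha_b$, and this is nonzero because $\omega_a\in\La_a$ forces $t_a\omega_a\in t_a\La_a$, which is disjoint from $\La'_a$ by \ref{thm:lineq0}; thus $(\la/t_a-\omega_a,\mu)<0$, and since $|\zeta^m|=1$ these terms tend to $0$ as well. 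Only the term $\la=\omega_a$ survives, contributing $C^{(a)}_{\omega_a}(\mu)=\ncC{a}(\mu)$. Letting $m\to\infty$ and identifying $\lim_m\ncQ{a}{m}(\mu)$ with $\nc{a}(\mu)$ via Proposition~\ref{prop:ncfunc} gives $\nc{a}=\ncC{a}$ on $\dom_1$.

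For positivity I would telescope. Since $W^{(a)}_0$ is the trivial module, $\Q{a}{0}=1$ and hence $\ncQ{a}{0}=1$; summing the increments of Proposition~\ref{prop:Epoly} gives, for $\mu\in\dom_1\subseteq\dom$,
$$
\ncQ{a}{m}(\mu)=1+\sum_{k=0}^{m-1}e^{-(k+1)(\alpha_a,\mu)}\,E^{(a)}_{k}(\mu).
$$
Each $E^{(a)}_{k}$ has positive coefficients by Proposition~\ref{prop:Epoly}, and every $e^{-\beta}(\mu)>0$ on $\dom$, so each summand is nonnegative; thus $\ncQ{a}{m}(\mu)$ is nondecreasing in $m$ with first term $1$. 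As the sequence converges (Proposition~\ref{prop:ncfunc}), its limit satisfies $\nc{a}(\mu)\geq1>0$, proving the claim.

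The step requiring the most care is the strictness of the dominance in the first part: one must be sure that $\omega_a$ is the unique exponent whose contribution does not decay. This is exactly where the combination of $\omega_a\in\La_a$ with conditions \ref{thm:lineq0}, \ref{thm:lineq1} and \ref{thm:lineq2} is essential, as is the passage to the open chamber $\dom$, which guarantees that a nonzero nonnegative combination of simple roots pairs strictly positively with $\mu$. The removal of the hyperplanes $H_a$ to form $\dom_1$ plays a logically separate role, ensuring merely that the coefficient functions $C^{(a)}_{\la}$ and $C^{(a)}_{\la,\zeta}$ are well-defined (Corollary~\ref{cor:Qsimpleform}); once these are finite, interchanging the finite sum with the limit is routine.
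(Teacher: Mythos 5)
Your proof is correct and follows essentially the same route as the paper: extract the dominant term $e^{m\omega_a}$ from (\ref{eq:QandC}) using \ref{thm:lineq1} and \ref{thm:lineq2} together with Corollary \ref{cor:Qsimpleform}, identify the limit with $\nc{a}$ via Proposition \ref{prop:ncfunc}, and get positivity by telescoping the increments of Proposition \ref{prop:Epoly} from $\ncQ{a}{0}=1$. Your one refinement---invoking \ref{thm:lineq0} and $\omega_a\in\La_a$ to rule out $\la=t_a\omega_a$ in $\La'_a$, so that those exponents decay strictly---is a point the paper leaves implicit, and it is a worthwhile clarification.
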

\begin{proof}
Consider (\ref{eq:QandC}). The asymptotic behavior of $\Q{a}{m}$, as functions on $\dom_1$, is governed by $e^{m\omega_a}$ as $m\to \infty$ :
$$
\lim_{m\to \infty} e^{-m\omega_a}\Q{a}{m} = 
\lim_{m\to \infty} \left(\sum_{\la\in \La_a} C^{(a)}_{\la}e^{m(-\omega_a+\la)}+\sum_{\la\in \La_a'}\sum_{\zeta : \zeta^{t_a}=1} C^{(a)}_{\la,\zeta}\zeta^m e^{m(-\omega_a+ \la/t_a)}\right) = \ncC{a}
$$
by \ref{thm:lineq1} and \ref{thm:lineq2} of Theorem \ref{thm:linQ}. We are also using 
Corollary \ref{cor:Qsimpleform} to make sure that all the expressions make sense on $\dom_1$. Combining this with Proposition \ref{prop:ncfunc}, we can conclude that $\ncC{a}=\nc{a}$ on $\dom_1$. 

The last statement follows from Proposition \ref{prop:Epoly}, which implies that
$$\ncQ{a}{m} = 1+e^{-\alpha_{a}}E_0+\dots + e^{-m\alpha_{a}}E_{m-1}\in 1+e^{-\alpha_{a}}\Z_{\geq 0}[e^{-\alpha_j}]_{j\in I}$$
and so $\ncQ{a}{m}\geq 1$ on $\dom_0\supseteq \dom_1$. Therefore, $\ncC{a}=\nc{a}\geq 1$.
\end{proof}
Hence, we can think of the coefficients $C(\mathcal{Q}^{(a)},\la,\zeta,l)$ as analogues of $\nc{a}$. Apart from $C(\mathcal{Q}^{(a)},\omega_a,1,1)=\ncC{a}$, which is equal to $\nc{a}$ and $C(\mathcal{Q}^{(a)},\omega_a-\alpha_a,1,1)$ which is closely related to the $\widetilde{Q}$-variable in \cite{2016arXiv160605301F}, such coefficients have not been defined and considered as objects of study in their own right. Their existence has become clearly visible after Theorem \ref{thm:linQ}.
\begin{example}
Let us consider an example. In type $A_1$, we have
$$
\ncC{1} = \frac{e^{\omega_1}}{e^{\omega_1}-e^{-\omega_1}} \in \K = \C(e^{\omega_1}),
$$
and
$$
\nc{1}=1+e^{-\alpha_1}+e^{-2\alpha_1}+\dots\in \Z[[e^{-\alpha_1}]].
$$
In this case, we can easily see that the power series $\nc{1}$ can be summed to be an element in $\K$. However, we do not know this in advance in general. \cite[Theorem 6.4]{MR2982441} gives an explicit formula for $\nc{a}$ but it still does not imply that $\nc{a}\in \K$. That is why we make the identification of two elements as functions on $\dom_1$ as in Proposition \ref{prop:fixdomain}. After proving Theorem \ref{thm:main}, we no longer need to worry much about their membership as we can regard $\ncMY{a}$ as elements of both $\K$ and $\Z[[e^{-\alpha_j}]]_{j\in I}$ by definition.
\end{example}

\subsection{Polyhedral formula for decomposition of KR modules}\label{ss:polyform}
Recall that $\theta$ denotes the highest root of $\g$. In \cite{MR1836791} Chari proved some explicit formula for decomposition of KR modules. For given $a\in I$, assume that $[\theta]_a\leq 2$. Then there exist a tuple of positive integers $(b_j)_{j\in J_{a}}$, and a tuple of dominant integral weights $(\la_j)_{j\in J_{a}}$ for some finite index set $J_{a}$ such that
\begin{equation}\label{eqn:KRpolyform}
\rwma=\bigoplus_{\mathbf{x}\in F_m^{(a)}} L(\la_{\mathbf{x}})
\end{equation}
where 
$F_m^{(a)}=\{(x_j)_{j\in J_{a}}\mid \sum_{j\in J_{a}}b_jx_j=m,\, x_j\in \Z_{\geq 0}\}$, and $\la_{\mathbf{x}}=\sum_{j\in J_{a}}x_j\la_j$ for each $\mathbf{x}\in F_m^{(a)}$. We call (\ref{eqn:KRpolyform}) a polyhedral formula. We will use some polyhedral formulas in Section \ref{sec:MYandpoly}. See \cite{MR1745263} for a systematic exposition on the subject and also for conjectural polyhedral formulas with multiplicity in exceptional types.

\section{Algebraic relations among normalized characters}\label{sec:algrelnc}
In this Section, we study (\ref{eqn:norsys}) for $\ncCfam$ and $\ncMYfam$, respectively. The central results are Propositions \ref{thm:algrel} and \ref{prop:relforMY}.
\begin{remark}
Throughout the paper, we never use an expression $s_a(\nc{a})$ in which $\nc{a}$ is regarded as a power series in $\Z[[e^{-\alpha_j}]]_{j\in I}$. We have an action of $W$ on $\K$ but not on $\Z[[e^{-\alpha_j}]]_{j\in I}$. That is why we do not study
(\ref{eqn:norsys}) for $\ncfam$ directly as a relation among power series. After establishing (\ref{eqn:norsys}) for $\ncMYfam$ and the identity $\nc{a}=\ncMY{a}$, we consequently have (\ref{eqn:norsys}) for $\ncfam$.
\end{remark}
\begin{prop}\label{prop:norR}
Let $a\in I$ be arbitrary. On the domain $\dom_0$ (see Proposition \ref{prop:ncfunc}), the limit
$$
\lim_{m\to \infty} e^{-m (\omega_a+s_a(\omega_a))}\left((\Qam)^2 - Q^{(a)}_{m+1}Q^{(a)}_{m-1}\right)
$$
exists, and it is equal to $\prod_{b:C_{ab}<0}(\nc{b})^{-C_{ab}}$.
\end{prop}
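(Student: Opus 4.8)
The plan is to feed the $Q$-system (\ref{eq:Qsys}) directly into the limit and then track the exponential weights by hand, so that everything collapses to the convergence already established in Proposition \ref{prop:ncfunc}. First I would record that $s_a(\omega_a)=\omega_a-\omega_a(h_a)\alpha_a=\omega_a-\alpha_a$, so the normalizing character is $e^{-m(\omega_a+s_a(\omega_a))}=e^{-m(2\omega_a-\alpha_a)}$. Substituting (\ref{eq:Qsys}) for the difference $(\Qam)^2-Q^{(a)}_{m+1}Q^{(a)}_{m-1}$, the quantity in question becomes $e^{-m(2\omega_a-\alpha_a)}$ times the product $\prod_{b:C_{ab}<0}\prod_{k=0}^{-C_{ab}-1}\Q{b}{n_{b,k}(m)}$, where I abbreviate $n_{b,k}(m):=\lfloor\frac{C_{ba}m-k}{C_{ab}}\rfloor$.

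Next I would normalize each factor using $\Q{b}{n}=e^{n\omega_b}\ncQ{b}{n}$, which splits the product into a purely exponential prefactor $e^{\sum_{b}(\sum_k n_{b,k}(m))\omega_b}$ and the bounded factor $\prod_{b}\prod_k\ncQ{b}{n_{b,k}(m)}$. The crux is to show that the exponential prefactor cancels $e^{-m(2\omega_a-\alpha_a)}$ exactly, not merely in the limit but identically in $m$. Writing $C_{ab}=-p$ and $C_{ba}=-q$ with $p,q>0$, each inner index is $n_{b,k}(m)=\lfloor\frac{qm}{p}+\frac{k}{p}\rfloor$, and Hermite's identity $\sum_{k=0}^{p-1}\lfloor x+\frac{k}{p}\rfloor=\lfloor px\rfloor$ gives $\sum_{k=0}^{-C_{ab}-1}n_{b,k}(m)=\lfloor qm\rfloor=-C_{ba}m$, an integer. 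Hence the total exponential weight is $m\sum_{b\neq a}(-C_{ba})\omega_b$, and since $\alpha_a=\sum_{b}C_{ba}\omega_b$ with $C_{aa}=2$ (the terms with $C_{ba}=0$ contributing nothing), this equals $m(2\omega_a-\alpha_a)$. The two exponentials therefore cancel and I obtain the clean identity
\[
e^{-m(\omega_a+s_a(\omega_a))}\left((\Qam)^2-Q^{(a)}_{m+1}Q^{(a)}_{m-1}\right)=\prod_{b:C_{ab}<0}\prod_{k=0}^{-C_{ab}-1}\ncQ{b}{n_{b,k}(m)}
\]
as an identity in $\Z[P]$, indeed in $\Z[e^{-\alpha_j}]_{j\in I}$.

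Finally I would pass to the limit on the right-hand side, which is a finite product. As $m\to\infty$ each index $n_{b,k}(m)\to\infty$, and by Proposition \ref{prop:ncfunc} the normalized character $\ncQ{b}{n}$ converges to $\nc{b}$ as a function on $\dom_0$; since the product is finite it converges on $\dom_0$ to $\prod_{b:C_{ab}<0}(\nc{b})^{-C_{ab}}$, the asserted value. The step I expect to be the main obstacle is the exact cancellation: it rests on the floor-sum identity returning the integer $-C_{ba}m$ with no residual fractional discrepancy, after which the weight bookkeeping via $\alpha_a=\sum_b C_{ba}\omega_b$ closes up precisely. Once that is in place, the analytic passage to the limit is routine, being justified entirely by Proposition \ref{prop:ncfunc} and the finiteness of the product.
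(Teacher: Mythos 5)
Your proposal is correct and follows essentially the same route as the paper's proof: substitute the $Q$-system, normalize each factor $\Q{b}{n}=e^{n\omega_b}\ncQ{b}{n}$, use Hermite's identity to see that the floor indices sum exactly to $-C_{ba}m$, identify the resulting weight with $2\omega_a-\alpha_a=\omega_a+s_a(\omega_a)$ via $\alpha_a=\sum_b C_{ba}\omega_b$, and invoke Proposition \ref{prop:ncfunc} for the limit on $\dom_0$. The only (cosmetic) difference is that you perform the exponential cancellation exactly for each $m$ before passing to the limit, whereas the paper takes limits factor by factor; the underlying computation is identical.
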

\begin{proof}
Note that this is equivalent to
\begin{equation}\label{eq:norR}
\lim_{m\to \infty} e^{m(\sum_{b:C_{ab}<0} C_{ba}\omega_b)}\prod_{b : C_{ab}< 0} \prod_{k=0}^{-C_{a b}-1}
Q^{(b)}_{\bigl \lfloor\frac{C_{b a}m - k}{C_{a b}}\bigr \rfloor} =\prod_{b : C_{ab}< 0} \left(\nc{b}\right)^{-C_{a b}}
\end{equation}
by (\ref{eq:Qsys}) and the fact that $\omega_a+s_a(\omega_a)= 2\omega_a-\alpha_a = -\sum_{b:C_{ab}<0} C_{ba}\omega_b$.

For $b\in I$ such that $C_{ab}<0$ and an integer $k$ with $0\leq k\leq -C_{a b}-1$, we have
$$
\lim_{m\to \infty} e^{\bigl \lfloor\frac{C_{b a}m - k}{C_{a b}}\bigr \rfloor \omega_a}
Q^{(b)}_{\bigl \lfloor\frac{C_{b a}m - k}{C_{a b}}\bigr \rfloor} = \nc{b}.
$$
Using Hermite's identity 
$$
\sum_{k=0}^{-C_{a b}-1} \bigl \lfloor\frac{-C_{b a}m + k}{-C_{a b}}\bigr \rfloor = -C_{ba}m, \qquad m\in \Z_{\geq 0}
$$
we obtain
$$
\lim_{m\to \infty} e^{mC_{ba}\omega_b}\prod_{k=0}^{-C_{a b}-1}
Q^{(b)}_{\bigl \lfloor\frac{C_{b a}m - k}{C_{a b}}\bigr \rfloor} = \left(\nc{b}\right)^{-C_{a b}},
$$
which implies (\ref{eq:norR}).
\end{proof}

\begin{lemma}\label{lem:wtineq}
For each $a\in I$, we have the following :
\begin{enumerate}[label=(\roman*), ref=(\roman*)]
\item \label{thm:main:6} For each $\la\in \La_a$ not equal to $\omega_a$, $s_a(\omega_a)\geq \la$.
\item \label{thm:main:7} For each $\la\in \La_a'$, $t_as_a(\omega_a)\geq \la$.
\end{enumerate}
\end{lemma}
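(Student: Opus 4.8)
The plan is to reduce both inequalities to a statement about the coefficient of $\alpha_a$. Since $\omega_a(h_a)=1$, we have $s_a\omega_a=\omega_a-\alpha_a$. Writing $\omega_a-\la=\sum_{b\in I}c_b\alpha_b$, part \ref{thm:main:6} is then exactly the assertion $c_a\geq 1$, because the remaining coefficients are already $\geq 0$ by \ref{thm:lineq1} and $s_a\omega_a-\la=(c_a-1)\alpha_a+\sum_{b\neq a}c_b\alpha_b$. Likewise, writing $t_a\omega_a-\la=\sum_{b\in I}d_b\alpha_b$ (with $d_b\geq 0$ by \ref{thm:lineq2}), part \ref{thm:main:7} is the assertion $d_a\geq t_a$.

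The engine for both is the parabolic subgroup $W_J$ with $J:=I\setminus\{a\}$, which fixes $\omega_a$: for $b\in J$ one has $s_b\omega_a=\omega_a-\omega_a(h_b)\alpha_b=\omega_a$. I would first show that the $\alpha_a$-coefficient is \emph{strictly} positive except at the top weight. For part \ref{thm:main:6}, suppose $c_a=0$, so that $\beta:=\omega_a-\la$ lies in $\oplus_{b\in J}\R_{\geq 0}\alpha_b$. For every $w\in W_J$ we have $w\la\in\La_a$ (as $\La_a$ is $W$-invariant), hence $w\beta=\omega_a-w\la\geq 0$ by \ref{thm:lineq1}, and $w\beta$ again lies in $\oplus_{b\in J}\R\alpha_b$. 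Averaging, $\sum_{w\in W_J}w\beta$ is $W_J$-invariant with all coefficients $\geq 0$; but the root space $\oplus_{b\in J}\R\alpha_b$ of the semisimple $\g_J$ has no nonzero $W_J$-invariant vector, so this sum vanishes, forcing each $w\beta=0$ and in particular $\beta=0$, i.e. $\la=\omega_a$. Thus $\la\neq\omega_a$ gives $c_a>0$. Part \ref{thm:main:7} runs in parallel: if $d_a=0$ then $\gamma:=t_a\omega_a-\la\in\oplus_{b\in J}\R_{\geq 0}\alpha_b$, and the same averaging over $W_J$ (now using $w(t_a\omega_a)=t_a\omega_a$ and \ref{thm:lineq2}) forces $\gamma=0$, i.e. $\la=t_a\omega_a$. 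Since $\omega_a\in\La_a$ by Theorem \ref{thm:linQ}(iii), this would place $\la$ in $t_a\La_a\cap\La_a'$, contradicting \ref{thm:lineq0}; hence $d_a>0$.

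It then remains to upgrade strict positivity to the stated integral bounds using the integrality built into the construction of $\La_a,\La_a'$ in \cite{Lee2016}: the elements of $\La_a$ lie in $\omega_a+Q$, so $c_a\in\Z$ and $c_a>0$ yields $c_a\geq 1$; and the elements of $\La_a'$ lie in $t_a\omega_a+t_aQ$, so $d_a\in t_a\Z$ and $d_a>0$ yields $d_a\geq t_a$. I expect the genuine obstacle to be precisely this last quantitative point in part \ref{thm:main:7}: the abstract properties listed in Theorem \ref{thm:linQ} give only $d_a>0$, and recovering the factor $t_a$ requires the divisibility $\la/t_a\in\omega_a+Q$ coming from the explicit twisted structure of $\La_a'$ in the Appendix of \cite{Lee2016}, rather than from the listed properties alone. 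Part \ref{thm:main:6}, by contrast, is fully uniform once $\La_a\subseteq\omega_a+Q$ is invoked.
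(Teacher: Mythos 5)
Your part~(i) is correct, and it is a genuinely different argument from the paper's. The paper deduces~(i) from the inclusion $\La_a\subseteq\Omega\left(L(\omega_a)\right)$ together with the saturation argument of Humphreys: every weight of $L(\omega_a)$ other than $\omega_a$ already satisfies $\omega_a-\alpha_a\succeq\la$. You instead use only the $W$-invariance of $\La_a$, property \ref{thm:lineq1} of Theorem \ref{thm:linQ}, the averaging over $W_J$ with $J=I\setminus\{a\}$ (a $W_J$-invariant vector in the span of $\{\alpha_b\}_{b\in J}$ vanishes, and a vanishing sum of vectors with nonnegative coordinates forces each summand to vanish), and the integrality $\La_a\subseteq\omega_a+Q$. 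This is sound and arguably more elementary; like the paper's proof, it still needs exactly one fact about the explicit sets from the Appendix of \cite{Lee2016} beyond the properties listed in Theorem \ref{thm:linQ}.

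Part~(ii), however, has a genuine gap at precisely the step you flagged, and your proposed repair does not work. The claim that $\La_a'\subseteq t_a\omega_a+t_aQ$ is false. Take $\g$ of type $B_r$ with $r\geq 3$ and $a=r$, so $t_r=2$: the paper's own proof records $\La'_r=\La_{r-2}$, and $\omega_{r-2}\in\La_{r-2}$ by Theorem \ref{thm:linQ}; but $2\omega_r-\omega_{r-2}=\alpha_{r-1}+2\alpha_r$ (the paper computes $2(\omega_r-\alpha_r)-\omega_{r-2}=\alpha_{r-1}$), which is not in $2Q$ because the coefficient of $\alpha_{r-1}$ equals $1$. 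So the divisibility you invoke fails already for dominant representatives. What your argument actually needs is weaker, namely only $[t_a\omega_a-\la]_a\in t_a\Z$; but this follows neither from your averaging step, nor from $W$-invariance, nor from any property listed in Theorem \ref{thm:linQ} --- it is a delicate statement about which $W$-orbits constitute $\La'_a$. Its status is genuinely sensitive to those orbits: the paper's verification in type $C_r$ runs over $\la\in\{\omega_0,\omega_1,\dots,\omega_{a-1}\}$, and if $\omega_1$ lies in $\La'_a$ for some $2\leq a\leq r-1$, then $[2\omega_a-\omega_1]_a=2a-1$ is odd, so the coefficient divisibility fails outright even though the conclusion of the lemma still holds there (since $2a-1\geq 2=t_a$).

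In short, your mechanism proves $d_a>0$, hence at best $d_a\geq 1$; the jump to $d_a\geq t_a$ is unsupported exactly when $t_a>1$, which is the only case in which~(ii) has content. (The positivity step itself is correct, and the use of $t_a\La_a\cap\La'_a=\emptyset$ from \ref{thm:lineq0} to exclude $\la=t_a\omega_a$ is a nice touch.) This is why the paper proves~(ii) not by a uniform argument but by explicit root arithmetic, type by type for $B_r$, $C_r$, $F_4$ and $G_2$, using the concrete dominant representatives of $\La'_a$ from the Appendix of \cite{Lee2016}; some case analysis of that kind is unavoidable to close your proof of part~(ii).
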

\begin{proof}
Let us denote the set of weight of $L(\omega_a)$ by $\Omega\left(L(\omega_a)\right)$. For an explicit description of $\La_a$ and $\La'_a$, see the Appendix of \cite{Lee2016}.

Consider \ref{thm:main:6}. Note first that $\La_a$ is a subset of $\Omega\left(L(\omega_a)\right)$. We can actually prove a stronger statement that $s_a(\omega_a)\succeq \la$ for any $\la \in \Omega\left(L(\omega_a)\right),\, \la\neq \omega_a$. Note that $\Omega\left(L(\omega_a)\right)$ is a saturated set of weights with highest weight $\omega_a$ in the sense of \cite[Section 13.4]{MR499562}. Then the argument of \cite[Lemma 13.4 B]{MR499562} shows that for any weight $\la \in \Omega\left(L(\omega_a)\right)$ such that $\la \neq \omega_a$, there exists a simple root $\alpha$ such that $\la+\alpha\in \Omega\left(L(\omega_a)\right)$. Repeated application of this implies that $\omega_a-\alpha_b \succeq \la$ for some $b\in I$. However, $\omega_a-\alpha_b \in \Omega\left(L(\omega_a)\right),\, b\in I$ if and only if $b=a$. This implies $s_a(\omega_a) = \omega_a-\alpha_a \succeq \la$.

Now we turn to \ref{thm:main:7}. We only have to consider it when $t_a\neq 1$, otherwise $\La_a'$ is empty. The proof is similar to the proof of \ref{thm:lineq2} of Theorem \ref{thm:linQ}. We need to check the following:

In type $B_r$, $2(\omega_r-\alpha_r)-\la\geq 0$ for $\la\in \La'_r$; it follows from $2(\omega_r-\alpha_r)-\omega_{r-2}=\alpha_{r-1}\geq 0$ and $\omega_{r-2}-\la\geq 0$ for $\la \in \La'_r=\La_{r-2}$.

In type $C_r$, $2(\omega_a-\alpha_a)-\la\geq 0$ for $\la\in \{\omega_0,\omega_1,\dots,\omega_{a-1}\}$ for each $a\in I\backslash{\{r\}}$; writing 
$2(\omega_a-\alpha_a) -\la = (\omega_a-\alpha_a)+(\omega_a-\la)-\alpha_a$, 
we can use $\omega_a\geq \alpha_a$ for each $a\in I$ and 
$$\omega_j-\omega_{j-1}=\frac{\alpha_r}{2}+\sum_{i=j}^{r-1}\alpha_i \geq 0, \forall j\in I\backslash{\{r\}},$$
which implies $(\omega_a-\omega_j)-\alpha_a\geq 0$ for $j=0,1,\dots, a-1$.

In type $F_4$, $2(\omega_3-\alpha_3)-\la\geq 0$ for $\la \in \La'_3\cap P^+=\{0,\omega _1,\omega _2,2 \omega _1,2 \omega _4,\omega _1+2 \omega _4\}$ and $2(\omega_4-\alpha_4)-\la\geq 0$ for $\la \in \{0,\omega_1\}=\La'_4\cap P^+=\{0,\omega _1\}$; in \cite[Proposition 3.1]{Lee2016}, we have already shown that $2\omega_3-\la \geq 2\alpha_3$ for $\la \in \La'_3\cap P^+$ and $2\omega_4-\la \geq 2\alpha_4$ for $\la \in \La'_4\cap P^+$.
% explicitly,
% $$
% \begin{aligned}
% 2 (\omega _3-\alpha_3)-0=4\alpha _1+8 \alpha _2+10 \alpha _3+6 \alpha _4, \,& 2 (\omega _3-\alpha_3)-\omega_1=2\alpha _1+5 \alpha _2+6 \alpha _3+4 \alpha _4, \\
% 2 (\omega _3-\alpha_3)-\omega_2=\alpha _1+2 \alpha _2+2 \alpha _3+2 \alpha _4 & 2 (\omega _3-\alpha_3)-2\omega_1=2 \alpha _2+2 \alpha _3+2 \alpha _4, \\
% 2 (\omega _3-\alpha_3)-2\omega_4=2\alpha _1+4\alpha _2+4 \alpha _3+2 \alpha _4, & 2 (\omega _3-\alpha_3)-\omega_1-2\omega_4=\alpha _2, \\
% 2 (\omega _4-\alpha_4)-0=\alpha _2+2 \alpha _3, & 2 (\omega _4-\alpha_4)-\omega_1=2\alpha _1+4 \alpha _2+6 \alpha _3+2 \alpha _4.
% \end{aligned}
% $$

In type $G_2$, $3(\omega_2-\alpha_2)-\la\geq 0$ where $\la \in \{0,\omega_1\}=\La'_2\cap P^{+}$; $3(\omega_2-\alpha _2)-0 =3\alpha _1+3\alpha _2$ and $3(\omega_2-\alpha _2)-\omega_1 =\alpha _1$.
\end{proof}

\begin{prop}\label{thm:algrel}
Assume that Theorem \ref{thm:linQ} holds for $a\in I$. Then
$$
(1-e^{\alpha_a})(1-e^{-\alpha_a})\ncC{a}s_a(\ncC{a}) = \prod_{b:C_{ab}<0}(\nc{b})^{-C_{ab}}
$$
as functions on $\dom_1$.
\end{prop}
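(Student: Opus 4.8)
The plan is to evaluate the left-hand limit of Proposition~\ref{prop:norR} directly from the closed form (\ref{eq:QandC}), since that proposition already identifies the right-hand side $\prod_{b:C_{ab}<0}(\nc{b})^{-C_{ab}}$ with
$$
\lim_{m\to\infty} e^{-m(\omega_a + s_a(\omega_a))}\,G_m, \qquad G_m := (\Qam)^2 - Q^{(a)}_{m+1}Q^{(a)}_{m-1}.
$$
It therefore suffices to show that this limit equals $(1-e^{\alpha_a})(1-e^{-\alpha_a})\,\ncC{a}\,s_a(\ncC{a})$ as functions on $\dom_1$; the two sides of the asserted identity then coincide with this common value. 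All manipulations take place pointwise on $\dom_1$, where every $C(\mathcal{Q}^{(a)},\la,\zeta,l)$ is a well-defined function by Corollary~\ref{cor:Qsimpleform}.

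First I would isolate the two leading exponents of $\Qam$. Rescaling the exponents in (\ref{eq:QandC}) by $t_a$ places them in $t_a\La_a\cup\La'_a$, and by the very definition of $\dom_1$ (Definition~\ref{def:dom1}) these pair to pairwise distinct real numbers against any $\mu\in\dom_1$, hence are totally ordered there. Theorem~\ref{thm:linQ}\ref{thm:lineq1}--\ref{thm:lineq2} forces $t_a\omega_a$ to be the largest, while Lemma~\ref{lem:wtineq}, combined with $t_a\La_a\cap\La'_a=\emptyset$ from Theorem~\ref{thm:linQ}\ref{thm:lineq0}, forces $t_a s_a(\omega_a)$ to be the unique second-largest, attained only by $s_a(\omega_a)$, which lies in $\La_a$ since $\La_a$ is $W$-invariant and contains $\omega_a$. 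Writing $A := \ncC{a}$ and $B := C^{(a)}_{s_a(\omega_a)}$, the $W$-equivariance (\ref{eqn:RWsym}) gives $B = s_a(A) = s_a(\ncC{a})$, so on $\dom_1$
$$
\Qam = A\,e^{m\omega_a} + B\,e^{m s_a(\omega_a)} + R_m,
$$
where every exponent occurring in the remainder $R_m$ pairs strictly below $s_a(\omega_a)$ against points of $\dom_1$, and the coefficients in $R_m$ are bounded (products of the $C$'s with roots of unity $\zeta^m$).

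Next I would substitute $\Qam = P_m + R_m$ with $P_m := A\,e^{m\omega_a} + B\,e^{m s_a(\omega_a)}$ into $G_m$ and treat the principal part separately. Setting $x = e^{\omega_a}$ and $y = e^{s_a(\omega_a)}$, so that $x/y = e^{\alpha_a}$, a direct expansion gives
$$
P_m^2 - P_{m+1}P_{m-1} = AB\,(xy)^m\bigl(2 - x/y - y/x\bigr) = AB\,(xy)^m\,(1-e^{\alpha_a})(1-e^{-\alpha_a}),
$$
using the elementary identity $2 - e^{\alpha_a} - e^{-\alpha_a} = (1-e^{\alpha_a})(1-e^{-\alpha_a})$. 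Since $(xy)^m = e^{m(\omega_a + s_a(\omega_a))}$, multiplying by $e^{-m(\omega_a + s_a(\omega_a))}$ turns this into the constant $(1-e^{\alpha_a})(1-e^{-\alpha_a})\,\ncC{a}\,s_a(\ncC{a})$, exactly the target value.

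It remains to show that the leftover contribution $G_m - (P_m^2 - P_{m+1}P_{m-1}) = 2P_mR_m + R_m^2 - P_{m+1}R_{m-1} - R_{m+1}P_{m-1} - R_{m+1}R_{m-1}$ contributes nothing after scaling, and I expect this remainder estimate to be the main obstacle. Each such product expands into terms of the form $(\text{bounded})\cdot e^{m\nu}\cdot e^{\kappa}$ whose leading weight $\nu$ is a sum of at most one exponent from $\{\omega_a, s_a(\omega_a)\}$ and at least one exponent from $R_m$; the strict separation on $\dom_1$ then gives $(\nu,\mu) < (\omega_a + s_a(\omega_a),\mu)$ for all $\mu\in\dom_1$, so each term is killed by the factor $e^{-m(\omega_a + s_a(\omega_a))}$ as $m\to\infty$. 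The delicate point is precisely that the only non-decaying contributions, those of weight $2\omega_a$, occur solely in the principal part and cancel identically between $P_m^2$ and $P_{m+1}P_{m-1}$; this cancellation is what forces one to look at the \emph{second} exponent $s_a(\omega_a)$, and is the reason the total ordering afforded by $\dom_1$ is indispensable. Collecting the principal computation with the vanishing of the remainder and invoking Proposition~\ref{prop:norR} then completes the proof.
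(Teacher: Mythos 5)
Your proof is correct and follows essentially the same route as the paper: both compute $\lim_{m\to\infty}e^{-m(\omega_a+s_a(\omega_a))}\bigl((\Qam)^2-Q^{(a)}_{m+1}Q^{(a)}_{m-1}\bigr)$ from the closed form (\ref{eq:QandC}), observe that the would-be dominant weight $2\omega_a$ cancels, identify the surviving coefficient at $\omega_a+s_a(\omega_a)$ via Lemma \ref{lem:wtineq}, Theorem \ref{thm:linQ}\ref{thm:lineq0} and the $W$-equivariance (\ref{eqn:RWsym}), and conclude by Proposition \ref{prop:norR}. The only difference is organizational: you split $\Qam$ into a two-term principal part plus remainder, whereas the paper extracts coefficients directly from the full double sum (where the cancellation appears as the vanishing of $2-z/w-w/z$ on the diagonal); the content is the same.
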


\begin{proof}
Let us plug (\ref{eq:QandC0}) into the left-hand side of (\ref{eq:Qsys}), i.e. $(\Qam)^2 - Q^{(a)}_{m+1}Q^{(a)}_{m-1}$ :
$$
\sum_{(\la_1,\zeta_1,l_1),(\la_2,\zeta_2,l_2)} C(\mathcal{Q}^{(a)},\la_1,\zeta_1,l_1)C(\mathcal{Q}^{(a)},\la_2,\zeta_2,l_2)(1-\frac{\zeta_1 e^{\la_1/l_1}}{\zeta_2 e^{\la_2/l_2}})\left((\zeta_1\zeta_2)^m e^{m (\la_1/l_1+\la_2/l_2)}\right).
$$ 
Hence, the coefficient for the exponential term $(\zeta_1\zeta_2)^m e^{m (\la_1/l_1+\la_2/l_2)}$ is
$$
C(\mathcal{Q}^{(a)},\la_1,\zeta_1,l_1)C(\mathcal{Q}^{(a)},\la_2,\zeta_2,l_2)(2-\frac{\zeta_1 e^{\la_1/l_1}}{\zeta_2 e^{\la_2/l_2}}-\frac{\zeta_2 e^{\la_2/l_2}}{\zeta_1 e^{\la_1/l_1}}),
$$
which vanishes when $(\la_1,\zeta_1,l_1)=(\la_2,\zeta_2,l_2)$.
Thus, the term $e^{m\omega_a+m\omega_a}$ does not appear in the above sum. Lemma \ref{lem:wtineq} shows that the exponential term $e^{m (\omega_a+s_a(\omega_a))}$ is dominating :
\begin{equation}\label{eq:norL}
\begin{aligned}
\lim_{m\to \infty} e^{-m (\omega_a+s_a(\omega_a))}\left((\Qam)^2 - Q^{(a)}_{m+1}Q^{(a)}_{m-1}\right) & = C^{(a)}_{\omega_a}C^{(a)}_{s_a(\omega_a)}(2-\frac{e^{\omega_a-s_a(\omega_a)}}{e^{s_a(\omega_a)-\omega_a}}-\frac{e^{s_a(\omega_a)-\omega_a}}{e^{\omega_a-s_a(\omega_a)}})\\
& =\ncC{a}s_a(\ncC{a})(1-e^{\alpha_a})(1-e^{-\alpha_a}).
\end{aligned}
\end{equation}
We have used (\ref{eqn:RWsym}) in the last line to get $s_a(\ncC{a}) = s_a(C^{(a)}_{\omega_a}) = C^{(a)}_{s_a(\omega_a)}$. Also note that we need 
Corollary \ref{cor:Qsimpleform} in this limiting procedure as in Proposition \ref{prop:fixdomain}. By Proposition \ref{prop:norR} we obtain the desired conclusion.
\end{proof}

\begin{cor}\label{cor:algrel2}
Let $\g$ be a simple Lie algebra which is not of type $E_7$ or $E_8$. The family $X=\ncCfam$ satisfies (\ref{eqn:norsys}) for every $a\in I$.
\end{cor}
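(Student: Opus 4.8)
The plan is to deduce Corollary \ref{cor:algrel2} directly from Proposition \ref{thm:algrel}, the only additional work being to check that the hypothesis of that proposition is met at every node in the types under consideration. Recall that for the family $X=\ncCfam$ the relation (\ref{eqn:norsys}) reads
$$
(1-e^{\alpha_a})(1-e^{-\alpha_a})\ncC{a}s_a(\ncC{a}) = \prod_{b:C_{ab}<0}(\ncC{b})^{-C_{ab}},
$$
so its left-hand side is precisely the left-hand side of the identity established in Proposition \ref{thm:algrel}. Thus, at any node $a$ for which Proposition \ref{thm:algrel} applies, the only discrepancy with (\ref{eqn:norsys}) lies on the right, where Proposition \ref{thm:algrel} produces the product $\prod_{b:C_{ab}<0}(\nc{b})^{-C_{ab}}$ involving $\nc{b}$ rather than $\ncC{b}$.

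First I would remove this discrepancy using Proposition \ref{prop:fixdomain}: as functions on $\dom_1$ one has $\nc{b}=\ncC{b}$ for every $b\in I$, so the two products coincide on $\dom_1$. Consequently, for each node $a$ at which Proposition \ref{thm:algrel} holds, the family $\ncCfam$ satisfies (\ref{eqn:norsys}) as an identity of functions on $\dom_1$. Throughout, all expressions are interpreted as functions on $\dom_1$, which by Corollary \ref{cor:Qsimpleform} is what makes both the Weyl-group action $s_a$ and the substitution $\nc{b}=\ncC{b}$ meaningful at once.

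The remaining point — and the only genuinely type-dependent one — is to verify that the standing hypothesis of Proposition \ref{thm:algrel}, namely that Theorem \ref{thm:linQ} holds for $a$, is satisfied for \emph{every} $a\in I$. This is immediate from the first clause in the statement of Theorem \ref{thm:linQ}: when $\g$ is of classical type or of type $E_6$, $F_4$ or $G_2$, the node $a\in I$ may be taken arbitrary. The restriction to a proper subset of nodes occurs only in types $E_7$ and $E_8$, which are exactly the types excluded in the corollary. Hence Proposition \ref{thm:algrel} is available at all nodes, and combining the three steps yields (\ref{eqn:norsys}) for $\ncCfam$ at every $a\in I$.

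I expect no serious obstacle here; the substantive content is already contained in Propositions \ref{thm:algrel} and \ref{prop:fixdomain}. The sole thing to keep track of is the bookkeeping of which nodes are covered by Theorem \ref{thm:linQ}, together with the insistence that every identity be read on the common domain $\dom_1$, so that the symmetry $s_a$ and the analytic identification $\nc{b}=\ncC{b}$ are simultaneously valid.
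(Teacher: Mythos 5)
Your proposal is correct and follows exactly the paper's own argument: the paper's proof likewise observes that Theorem \ref{thm:linQ} holds at every node for the types in question, invokes Proposition \ref{prop:fixdomain} to identify $\nc{b}$ with $\ncC{b}$ on $\dom_1$, and concludes via Proposition \ref{thm:algrel}. Your write-up merely spells out these same three steps in more detail.
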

\begin{proof}
For such $\g$, Theorem \ref{thm:linQ} holds for any $a\in I$. By Proposition \ref{prop:fixdomain}, $\nc{a}=\ncC{a}$ as functions on $\dom_1$. We can finish the proof by Proposition \ref{thm:algrel}.
\end{proof}
\begin{remark}\label{rmk:QQcomparison}
Let us compare (\ref{eqn:norsys}) with the $Q\widetilde{Q}$-system \cite[Theorem 3.2]{2016arXiv160605301F}. If we turn the $Q\widetilde{Q}$-system into a relation among ordinary characters (not $q$-characters so that we can ignore the spectral parameters), then it takes the following form
\begin{equation}\label{eqn:QQrel}
e^{\alpha_i/2}Q_{a}\widetilde{Q}_{a}-e^{-\alpha_i/2}Q_{a}\widetilde{Q}_{a} = \prod_{b:C_{ab}<0}(Q_{b})^{-C_{ab}},
\end{equation}
which is satisfied by setting $Q_{a} = \nc{a}$ and 
$$
\widetilde{Q}_{a} = \frac{e^{-\alpha_i/2}}{1-e^{-\alpha_i}}(\nc{a})^{-1}\prod_{b:C_{ab}<0}(\nc{b})^{-C_{ab}},\qquad \text{\cite[Proposition 4.7]{2016arXiv160605301F}}.
$$
To be able to obtain (\ref{eqn:norsys}) from (\ref{eqn:QQrel}), we need an additional relation $\widetilde{Q}_{a} = (e^{-\alpha_a/2}-e^{\alpha_a/2})s_a(Q_a)$, which is not addressed in \cite{2016arXiv160605301F}. It will be an interesting problem to find its $q$-character analogue, which might enable a more elegant approach to (\ref{eqn:norsys}) than given here without relying on Theorem \ref{thm:linQ}.
\end{remark}

Now we consider (\ref{eqn:norsys}) for $\ncMYfam$.
\begin{lemma}\label{lma:simpleref}
Let $\alpha$ be a root of $\g$ and $a\in I$. Then
$$
[\alpha]_a+[s_a(\alpha)]_{a} = -\sum_{b:C_{ab}<0}C_{ab}[\alpha]_b.
$$
\end{lemma}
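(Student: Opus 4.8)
The plan is to prove this by a direct linear-algebra computation, reading off the coefficient of $\alpha_a$ in the root-lattice expansion before and after applying the simple reflection. The only input needed is the explicit action $s_a(\lambda) = \lambda - \lambda(h_a)\alpha_a$ on $\h_{\R}^{*}$ together with the convention $C_{ab} = \alpha_b(h_a)$.

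First I would expand $\alpha = \sum_{b\in I}[\alpha]_b\alpha_b$ and evaluate at $h_a$ to get $\alpha(h_a) = \sum_{b\in I}[\alpha]_b\,\alpha_b(h_a) = \sum_{b\in I}C_{ab}[\alpha]_b$. Substituting into $s_a(\alpha) = \alpha - \alpha(h_a)\alpha_a$ and extracting the coefficient of $\alpha_a$ then yields $[s_a(\alpha)]_a = [\alpha]_a - \sum_{b\in I}C_{ab}[\alpha]_b$, since the coefficients of the remaining $\alpha_b$ are left unchanged by $s_a$ and do not enter the statement.

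Next I would add $[\alpha]_a$ to this identity, producing $[\alpha]_a + [s_a(\alpha)]_a = 2[\alpha]_a - \sum_{b\in I}C_{ab}[\alpha]_b$. The diagonal Cartan entry $C_{aa}=2$ cancels the term $2[\alpha]_a$, leaving $-\sum_{b\neq a}C_{ab}[\alpha]_b$. Because the off-diagonal entries satisfy $C_{ab}\leq 0$, with $C_{ab}=0$ exactly when $b$ is not adjacent to $a$ in the Dynkin diagram, the summation range may be restricted to those $b$ with $C_{ab}<0$ without changing the value, which is precisely the claimed right-hand side.

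The argument is entirely elementary, so I do not expect a genuine obstacle; the two points deserving a word of care are the bookkeeping convention $C_{ab}=\alpha_b(h_a)$ (so that the correct diagonal term $C_{aa}=2$ is the one absorbing the extra copy of $[\alpha]_a$) and the fact that $s_a$ preserves the root lattice, so that $s_a(\alpha)$ again has well-defined integer coordinates $[s_a(\alpha)]_b$ and the left-hand side makes sense.
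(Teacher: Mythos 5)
Your proof is correct and follows essentially the same route as the paper: both expand $\alpha$ in the simple-root basis, use $C_{ab}=\alpha_b(h_a)$ to compute $[s_a(\alpha)]_a = [\alpha]_a - \sum_{b\in I}C_{ab}[\alpha]_b$, and then let the diagonal entry $C_{aa}=2$ absorb the extra $[\alpha]_a$ before restricting the sum to $b$ with $C_{ab}<0$. The only cosmetic difference is that the paper distributes $s_a$ over the terms $\alpha_b$ while you apply $s_a(\alpha)=\alpha-\alpha(h_a)\alpha_a$ directly; the computation is identical.
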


\begin{proof}
Let $c_b := [\alpha]_b$. Then $\alpha = \sum_{b\in I}c_b\alpha_b$. Note that 
$$
\begin{aligned}
s_a(\alpha) & =\sum_{b\in I}c_bs_a(\alpha_b) =\sum_{b\in I}c_b\left(\alpha_b - \alpha_b(h_a)\alpha_a\right) \\
& =\sum_{b\in I}c_b\alpha_{b}-\sum_{b\in I}C_{ab}c_b\alpha_a,
\end{aligned}
$$
which implies
$$
\left[\alpha\right]_a+[s_a(\alpha)]_{a} = c_a+(c_a- \sum_{b\in I}C_{ab}c_b)  = -\sum_{b:C_{ab}<0}C_{ab}[\alpha]_b.
$$
\end{proof}

\begin{prop}\label{prop:relforMY}
The family $X=\left(\ncMY{a}\right)_{a\in I}$ satisfies (\ref{eqn:norsys}) for every $a\in I$.
\end{prop}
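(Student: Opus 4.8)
The plan is to substitute the product formula (\ref{eqn:MY}) into both sides of (\ref{eqn:norsys}) and verify the identity directly. Unwinding $\Xrel{a}$ exactly as on the left-hand side of Proposition \ref{thm:algrel}, the goal is to show
\begin{equation*}
(1-e^{\alpha_a})(1-e^{-\alpha_a})\,\ncMY{a}\,s_a(\ncMY{a}) = \prod_{b:C_{ab}<0}(\ncMY{b})^{-C_{ab}}.
\end{equation*}
Everything here is an honest element of $\K$, on which $W$ acts, so $s_a(\ncMY{a})$ makes sense and none of the analytic care needed for Proposition \ref{thm:algrel} is required; the argument is purely formal.

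First I would recall the classical fact that $s_a$ permutes $\posR\setminus\{\alpha_a\}$ while sending $\alpha_a$ to $-\alpha_a$. Applying $s_a$ to (\ref{eqn:MY}) and peeling off the factor indexed by $\alpha_a$ gives
\begin{equation*}
s_a(\ncMY{a}) = \frac{1}{(1-e^{\alpha_a})\prod_{\alpha\in\posR\setminus\{\alpha_a\}}(1-e^{-s_a(\alpha)})^{[\alpha]_a}}.
\end{equation*}
Reindexing the remaining product by $\beta=s_a(\alpha)$, which again ranges over $\posR\setminus\{\alpha_a\}$ and satisfies $\alpha = s_a(\beta)$, replaces the exponent $[\alpha]_a$ by $[s_a(\beta)]_a$.

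Next I would multiply $\ncMY{a}$ and $s_a(\ncMY{a})$, splitting off the $\alpha_a$-factor $(1-e^{-\alpha_a})$ of $\ncMY{a}$ as well. The prefactor $(1-e^{\alpha_a})(1-e^{-\alpha_a})$ is precisely tailored to cancel these two exceptional factors, leaving
\begin{equation*}
(1-e^{\alpha_a})(1-e^{-\alpha_a})\,\ncMY{a}\,s_a(\ncMY{a}) = \prod_{\alpha\in\posR\setminus\{\alpha_a\}}(1-e^{-\alpha})^{-\left([\alpha]_a+[s_a(\alpha)]_a\right)}.
\end{equation*}
Since the $\alpha_a$-term would contribute exponent $0$, the product may harmlessly be taken over all of $\posR$. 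Then Lemma \ref{lma:simpleref} rewrites each exponent as $\sum_{b:C_{ab}<0}(-C_{ab})[\alpha]_b$, and interchanging the two products regroups the factors into $\prod_{b:C_{ab}<0}\left(\prod_{\alpha\in\posR}(1-e^{-\alpha})^{[\alpha]_b}\right)^{-C_{ab}}=\prod_{b:C_{ab}<0}(\ncMY{b})^{-C_{ab}}$, which is the claim.

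This argument is essentially bookkeeping, as anticipated in the introduction, and I do not expect a genuine obstacle. The only point requiring care is the exceptional behavior of the single root $\alpha_a$ under $s_a$: it is the one positive root sent outside $\posR$, and the seemingly ad hoc prefactor $(1-e^{\alpha_a})(1-e^{-\alpha_a})$ is exactly what absorbs both the $\alpha_a$-factor of $\ncMY{a}$ and the $(-\alpha_a)$-factor produced by $s_a$. Aligning the reindexing with this cancellation is the crux; once it is in place, Lemma \ref{lma:simpleref} finishes the computation mechanically.
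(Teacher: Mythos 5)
Your proof is correct and follows essentially the same route as the paper: split off the factor at $\alpha_a$, use that $s_a$ permutes $\posR\setminus\{\alpha_a\}$, extend the product over all of $\posR$ since the $\alpha_a$-exponent vanishes, and finish with Lemma \ref{lma:simpleref}. One bookkeeping slip in your final display: since $\prod_{\alpha\in\posR}(1-e^{-\alpha})^{[\alpha]_b}$ equals $(\ncMY{b})^{-1}$, not $\ncMY{b}$, the regrouping should read $\prod_{b:C_{ab}<0}\bigl(\prod_{\alpha\in\posR}(1-e^{-\alpha})^{[\alpha]_b}\bigr)^{C_{ab}}=\prod_{b:C_{ab}<0}(\ncMY{b})^{-C_{ab}}$ (the total exponent of $(1-e^{-\alpha})$ being $-\left([\alpha]_a+[s_a(\alpha)]_a\right)=\sum_{b:C_{ab}<0}C_{ab}[\alpha]_b$ by the lemma), whereas the expression you wrote equals $\prod_{b:C_{ab}<0}(\ncMY{b})^{C_{ab}}$.
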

\begin{proof}
Let us consider the left-hand side of $(\ref{eqn:norsys})$. Recall the well-known fact that for any positive root $\alpha\neq \alpha_a$, $s_a(\alpha)$ is again a positive root. Hence,
$$
\begin{aligned}
(1-e^{\alpha_a})(1-e^{-\alpha_a})\ncMY{a}s_a(\ncMY{a}) & = \frac{1}{\prod_{\alpha\in \posR\backslash{\{\alpha_a\}}}(1-e^{-\alpha})^{[\alpha]_a+[s_a(\alpha)]_{a}}} \\
& = \frac{1}{\prod_{\alpha\in \posR}(1-e^{-\alpha})^{[\alpha]_a+[s_a(\alpha)]_{a}}} & \text{as $[\alpha_a]_a+[s_a(\alpha_a)]_{a}=0$}.
\end{aligned}
$$

The right-hand side of (\ref{eqn:norsys}) is
$$
\prod_{b:C_{ab}<0}(\ncMY{b})^{-C_{ab}} = \prod_{b:C_{ab}<0}\frac{1}{\prod_{\alpha\in \posR}(1-e^{-\alpha})^{-C_{ab}[\alpha]_b}} = \frac{1}{\prod_{\alpha\in \posR}(1-e^{-\alpha})^{-\sum_{b:C_{ab}<0} C_{ab}[\alpha]_b}}.
$$

We can now finish the proof by Lemma \ref{lma:simpleref}.
\end{proof}

\section{Proof of Main Theorem : preliminary cases}\label{sec:MYandpoly}
In this Section, we prove (\ref{eqn:MY}) for some $a\in I$. Here we can use the corresponding polyhedral formula without much effort. Let us recall the Weyl denominator formula
$$
\sum_{w\in W}(-1)^{\ell(w)}e^{w(\rho)} = e^{\rho}\weylD.
$$
More generally, for a subset $J\subseteq I$, applying the above, we obtain
$$
\sum_{w\in W_{J}}(-1)^{\ell(w)}e^{w(\rho)} = e^{\rho}\prod_{\alpha \in \posR(J)}(1-e^{-\alpha}).
$$
In particular, when $J=I\backslash \{a\}$ for some $a\in I$, $W_{J} = W_{\omega_a}$ and the above equality takes the form
\begin{equation}\label{eq:para2}
\sum_{w\in W_{\omega_a}}(-1)^{\ell(w)}e^{w(\rho)} = e^{\rho}\prod_{\alpha \in \posR(I\backslash \{a\})}(1-e^{-\alpha}).
\end{equation}
\begin{prop}\label{prop:type1}
Let $\g$ be a simple Lie algebra and assume that $a\in I$ satisfies $[\theta]_{a}=1$. Then $\nc{a} = \ncC{a} = \ncMY{a}$ holds.
\end{prop}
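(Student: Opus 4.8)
The plan is to show directly that $\Qam$ is the character of a single irreducible module and then let $m\to\infty$ through the Weyl character formula, so that the asserted identity becomes an instance of the parabolic denominator formula (\ref{eq:para2}). Since every node with $[\theta]_a=1$ satisfies the hypotheses of Theorem \ref{thm:linQ} (the excluded nodes all have $[\theta]_a>1$, and there are none in $E_8$, $F_4$, $G_2$), the quantity $\ncC{a}$ is defined, and $\ncC{a}=\nc{a}$ on $\dom_1$ by Proposition \ref{prop:fixdomain}. Hence the entire statement reduces to proving $\nc{a}=\ncMY{a}$.

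First I would invoke the polyhedral formula (\ref{eqn:KRpolyform}). The condition $[\theta]_a=1$ is precisely the degenerate case in which $F_m^{(a)}$ collapses to a single point, so that $\rwma\cong L(m\omega_a)$ and $\Qam=\chi(L(m\omega_a))$. I expect this to be the one step carrying genuine content; everything afterward is formal. Next, writing
\[
\ncQ{a}{m}=\frac{1}{e^{\rho}\,\weylD}\sum_{w\in W}(-1)^{\ell(w)}\,e^{\,w(m\omega_a+\rho)-m\omega_a},
\]
I would analyze this as a function on $\dom_0$. For strictly dominant $\mu$ the exponent $m\,(w(\omega_a)-\omega_a,\mu)+(w(\rho),\mu)$ stays bounded exactly when $w(\omega_a)=\omega_a$, i.e. $w\in W_{\omega_a}$: indeed $\omega_a\succeq w(\omega_a)$ always, and since $(\alpha_b,\mu)>0$ for all $b$ the pairing $(\omega_a-w(\omega_a),\mu)$ is strictly positive unless $w(\omega_a)=\omega_a$, so all remaining terms decay. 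For $w\in W_{\omega_a}$ one has $w(m\omega_a+\rho)-m\omega_a=w(\rho)$, so the surviving limit is
\[
\nc{a}=\frac{1}{e^{\rho}\,\weylD}\sum_{w\in W_{\omega_a}}(-1)^{\ell(w)}\,e^{w(\rho)}.
\]

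Finally I would apply (\ref{eq:para2}) with $J=I\backslash\{a\}$ (so that $W_J=W_{\omega_a}$), which identifies the numerator with $e^{\rho}\prod_{\alpha\in\posR(I\backslash\{a\})}(1-e^{-\alpha})$. Therefore
\[
\nc{a}=\frac{\prod_{\alpha\in\posR(I\backslash\{a\})}(1-e^{-\alpha})}{\prod_{\alpha\in\posR}(1-e^{-\alpha})}=\prod_{\alpha\in\posR,\ [\alpha]_a\geq 1}(1-e^{-\alpha})^{-1},
\]
and since $[\theta]_a=1$ forces $[\alpha]_a\in\{0,1\}$ for every $\alpha\in\posR$, the right-hand side is exactly $\ncMY{a}$. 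The hard part is thus localized in the first step, namely confirming that $[\theta]_a=1$ collapses the polyhedral decomposition to $L(m\omega_a)$; the limiting argument is routine dominant-weight bookkeeping (legitimate on $\dom_0$ by Corollary \ref{cor:Qsimpleform}), and the concluding identity is an immediate specialization of the Weyl denominator formula.
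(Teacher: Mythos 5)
Your proposal is correct and follows essentially the same route as the paper: the collapse of the polyhedral formula to $\Qam=\chi(L(m\omega_a))$, the Weyl character formula, the parabolic denominator identity (\ref{eq:para2}), and the observation that $[\theta]_a=1$ forces $[\alpha]_a\leq 1$. The only (cosmetic) differences are that you compute $\nc{a}$ by an explicit limit on $\dom_0$ where the paper extracts the coefficient $\ncC{a}$ of $e^{m\omega_a}$ directly, and you helpfully verify that Theorem \ref{thm:linQ} applies to every node with $[\theta]_a=1$; note only that the citation of Corollary \ref{cor:Qsimpleform} is unnecessary for your limit argument (and concerns $\dom_1$ rather than $\dom_0$).
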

\begin{proof}
By Proposition \ref{prop:fixdomain}, it is enough to show $\ncC{a} = \ncMY{a}$. In this case, the polyhedral formula takes the simplest possible form, namely, $\Qam = \chi(L(m\omega_a))$. So the Weyl character formula implies
$$
\Qam  = \frac{\sum_{w\in W}(-1)^{\ell(w)}e^{w(\rho)}e^{w(m\omega_a)}}{e^{\rho}\weylD}.
$$
Hence,
$$
\begin{aligned}
\ncC{a} & = \frac{\sum_{w\in W_{\omega_a}}(-1)^{\ell(w)}e^{w(\rho)}}{e^{\rho}\weylD} & \\
& = \frac{\prod_{\alpha \in \posR(I\backslash \{a\})}(1-e^{-\alpha})}{\weylD} & \text{by (\ref{eq:para2})} \\
& = \frac{1}{\prod_{\alpha \in \posR,\, [\alpha]_a\neq 0}(1-e^{-\alpha})}.
\end{aligned}
$$
The fact that $\theta[a]=1$ implies $[\alpha]_a \leq 1$ for each $\alpha\in \posR$. Therefore, the last line is equal to $\ncMY{a}$.
\end{proof}
\begin{prop}\label{prop:type2}
Assume that $(\g,a)$ is one of the following pairs  :
$$
(E_7,1), (E_8,7), (F_4,1), (G_2,1). 
$$
Then $\nc{a} = \ncC{a} = \ncMY{a}$ holds.
\end{prop}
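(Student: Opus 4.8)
The plan is to follow the template of Proposition \ref{prop:type1}, the only new feature being that each of these four nodes has $[\theta]_a=2$, so (\ref{eqn:KRpolyform}) is no longer a single character. First I would use Proposition \ref{prop:fixdomain} to reduce the assertion to the identity $\ncC{a}=\ncMY{a}$ of functions on $\dom_1$, together with $\ncC{a}=\nc{a}=\lim_{m\to\infty}e^{-m\omega_a}\Qam$ on $\dom_1$. A structural observation makes these cases tractable: for all four pairs one has $\theta=\omega_a$, whence $(\theta,\alpha_b)=0$ for $b\neq a$; if $\gamma\in\posR$ had $[\gamma]_a=2$ with $\gamma\neq\theta$, then $\theta-\gamma$ would be a nonzero nonnegative combination of the $\alpha_b$ with $b\neq a$, forcing $(\gamma,\gamma)=(\theta,\theta)+(\theta-\gamma,\theta-\gamma)>2$, which is impossible. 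Hence $\theta$ is the unique positive root with $[\alpha]_a=2$, so in $\ncMY{a}$ only the factor $(1-e^{-\theta})$ is squared. I would then write down Chari's explicit polyhedral formula for $a$, giving $\Qam=\sum_{\mathbf{x}\in F_m^{(a)}}\chi(L(\la_{\mathbf{x}}))$.

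Next I would substitute the Weyl character formula into each $\chi(L(\la_{\mathbf{x}}))$ and interchange the sums over $w\in W$ and $\mathbf{x}\in F_m^{(a)}$. Since $\la_{\mathbf{x}}$ is affine-linear in $\mathbf{x}$, for each fixed $w$ the inner sum over the polytope $F_m^{(a)}$ is a finite geometric series, which I would evaluate in closed form before multiplying by $e^{-m\omega_a}$ and letting $m\to\infty$ on $\dom_1$. As in Proposition \ref{prop:type1}, dominance of $\omega_a$ (that is, $w(\omega_a)\leq\omega_a$, with equality exactly on $W_{\omega_a}$) kills every contribution with $w\notin W_{\omega_a}$; the extra care needed here, compared with the single-character case, is to check that the possibly growing top terms of the geometric series — those $w$ for which the geometric ratio exceeds $1$ — are nevertheless suppressed, which holds because their growth rate is governed by $e^{-m\theta}$, a factor that tends to $0$ throughout $\dom\supseteq\dom_1$. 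Each surviving $w\in W_{\omega_a}$ then contributes, through the limit of the geometric sum, the same extra factor $(1-e^{-\theta})^{-1}$.

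Finally I would collect the surviving terms and apply the parabolic denominator formula (\ref{eq:para2}) exactly as in Proposition \ref{prop:type1} to rewrite $\sum_{w\in W_{\omega_a}}(-1)^{\ell(w)}e^{w(\rho)}$ as $e^{\rho}\prod_{\alpha\in\posR(I\backslash \{a\})}(1-e^{-\alpha})$. This yields
\[
\ncC{a}=\frac{1}{1-e^{-\theta}}\cdot\frac{1}{\prod_{\alpha\in\posR,\,[\alpha]_a\geq 1}(1-e^{-\alpha})},
\]
and since $\theta$ is the unique root with $[\alpha]_a=2$, the isolated factor $(1-e^{-\theta})^{-1}$ is exactly what upgrades the exponent of $(1-e^{-\theta})$ from $1$ to $2$, so the right-hand side equals $\ncMY{a}$.

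I expect the main obstacle to be the middle step: organizing the interchange of summation and the closed-form evaluation and limit of the geometric sum over the growing polytope so that precisely the factor $(1-e^{-\theta})^{-1}$, and nothing spurious such as a $(1+e^{-\theta})^{-1}$, survives for each $w\in W_{\omega_a}$. This is where the computation is genuinely sensitive to the polyhedral data $(b_j,\la_j)$, and where one should be ready to examine the four types with some case-by-case attention. The same content can be packaged more structurally through the generating function of Theorem \ref{thm:linQ}: the polyhedral formula gives
\[
\mathcal{Q}^{(a)}(t)=\frac{\sum_{w\in W}(-1)^{\ell(w)}e^{w(\rho)}\prod_{j}(1-e^{w(\la_j)}t^{b_j})^{-1}}{\sum_{w\in W}(-1)^{\ell(w)}e^{w(\rho)}},
\]
and $\ncC{a}=C^{(a)}_{\omega_a}$ is the coefficient extracted as $\lim_{t\to e^{-\omega_a}}(1-e^{\omega_a}t)\mathcal{Q}^{(a)}(t)$; part \ref{thm:lineq0} of Theorem \ref{thm:linQ} guarantees this pole is simple, which is the structural reason no unwanted factor can appear.
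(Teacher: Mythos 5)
Your proposal is correct and follows essentially the same route as the paper's proof: reduce to $\ncC{a}=\ncMY{a}$ via Proposition \ref{prop:fixdomain}, use the polyhedral formula (which for these four adjoint nodes is uniformly $\Qam=\sum_{k=0}^{m}\chi(L(k\omega_a))$, so no case-by-case analysis is actually needed), pass to the limit so that each $w\in W_{\omega_a}$ contributes the factor $(1-e^{-\omega_a})^{-1}$ while all $w\notin W_{\omega_a}$ are suppressed, and finish with (\ref{eq:para2}) together with the observation that $\omega_a=\theta$ is the unique positive root with $[\alpha]_a=2$. Your only additions are the (correct) length argument proving that uniqueness, which the paper merely asserts, and the flagged concern about sensitivity to the polyhedral data, which disappears once the explicit decomposition above is quoted.
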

\begin{proof}
First we observe the following properties :
\begin{itemize}
\item $[\theta]_{a}=2$; 
\item $\omega_a = \theta$ and hence,  $\omega_a \in \posR$;
\item $\omega_a$ is the unique positive root $\alpha\in \posR$ such that $[\alpha]_{a}=2$.
\end{itemize}
In this case, the polyhedral formula is $\Qam = \sum_{k=0}^{m}\chi(L(k\omega_a))$, giving
$$
\Qam  = \frac{\sum_{k=0}^{m}\sum_{w\in W}(-1)^{\ell(w)}e^{w(\rho)}e^{w(k\omega_a)}}{e^{\rho} \weylD}.
$$
Hence,
$$
\begin{aligned}
\ncC{a} & = \frac{\sum_{w\in W_{\omega_a}}(-1)^{\ell(w)}e^{w(\rho)}\frac{1}{(1-e^{-\omega_a})}}{e^{\rho}\weylD} \\
& = \frac{1}{(1-e^{-\omega_a})}\frac{\prod_{\alpha \in \posR(I\backslash \{a\})}(1-e^{-\alpha})}{\weylD} & \text{by (\ref{eq:para2})} \\
& = \frac{1}{(1-e^{-\omega_a})}\frac{1}{\prod_{\alpha \in \posR,\, [\alpha]_a\neq 0}(1-e^{-\alpha})}.
\end{aligned}
$$
From the observation that $\omega_a=\theta\in \posR$ and it is the unique positive root with $[\alpha]_{a} = 2$, we conclude $\ncC{a}=\ncMY{a}$ and thus, $\nc{a}=\ncMY{a}$ by Proposition \ref{prop:fixdomain}.
\end{proof}

\begin{prop}\label{prop:type3E8}
Assume that $\g$ is of type $E_8$ and $a=1$. Then $\nc{1} = \ncC{1} = \ncMY{1}$.
\end{prop}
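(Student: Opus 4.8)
The plan is to treat node $1$ of $E_8$ by the same mechanism as Propositions \ref{prop:type1} and \ref{prop:type2}, the new feature being that here $[\theta]_1=2$ while $\omega_1$ is \emph{not} a root: indeed $\omega_1\neq\theta=\omega_7$, and since all roots of the simply-laced algebra $\g$ form the single Weyl orbit $W\theta$ whose unique dominant member is $\theta$, we have $\omega_1\notin\posR$. By Proposition \ref{prop:fixdomain} (applicable since $a=1$ is among the $E_8$ nodes for which Theorem \ref{thm:linQ} holds) it suffices to prove $\ncC{1}=\ncMY{1}$ on $\dom_1$. I would first record the relevant root combinatorics: deleting node $1$ leaves the Levi subalgebra of type $D_7$, so that $W_{\omega_1}=W_{I\backslash\{1\}}$ is its Weyl group and $\posR(I\backslash \{1\})$ its positive system; moreover the grading of $\g$ by the $\alpha_1$-coefficient is $2$-step, the $14$ positive roots $\alpha$ with $[\alpha]_1=2$ being exactly the weights of the vector representation of this $D_7$ (the $64$ roots with $[\alpha]_1=1$ forming a half-spin representation). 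Since $[\alpha]_1\in\{0,1,2\}$ for every $\alpha\in\posR$, the target $\ncMY{1}=\prod_{\alpha\in\posR}(1-e^{-\alpha})^{-[\alpha]_1}$ differs from the parabolic expression $\prod_{\alpha:\,[\alpha]_1\neq0}(1-e^{-\alpha})^{-1}$ precisely by the extra factor $\prod_{\alpha:\,[\alpha]_1=2}(1-e^{-\alpha})^{-1}$.

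Next I would feed Chari's polyhedral formula (\ref{eqn:KRpolyform}) for $W^{(1)}_m$ into the Weyl character formula, exactly as in Proposition \ref{prop:type2}, writing
\[
\Q{1}{m}=\frac{\sum_{\mathbf{x}\in F^{(1)}_m}\sum_{w\in W}(-1)^{\ell(w)}e^{w(\rho+\la_{\mathbf{x}})}}{e^{\rho}\weylD}.
\]
Then $\ncC{1}=\lim_{m\to\infty}e^{-m\omega_1}\Q{1}{m}$, the limit being legitimate on $\dom_1$ by Corollary \ref{cor:Qsimpleform}. The $\omega_1$-degree of each $\la_{\mathbf{x}}$ is at most $m$, with equality only on the top face of the polytope $F^{(1)}_m$, and $w$ preserves this degree only for $w\in W_{\omega_1}$; hence on $\dom_1$ all other contributions decay. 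I would then reorganize the surviving limit, after dividing by $e^{\rho}\weylD$ and applying the parabolic denominator identity (\ref{eq:para2}) with $J=I\backslash\{1\}$ (which replaces the $W_{\omega_1}$-part by $e^{\rho}\prod_{\alpha\in\posR(I\backslash \{1\})}(1-e^{-\alpha})$ and cancels it against $\weylD$), into the form
\[
\ncC{1}=\frac{\Phi}{\prod_{\alpha\in\posR,\,[\alpha]_1\neq0}(1-e^{-\alpha})},
\]
where $\Phi$ is a convergent rational expression recording the top-face contribution of the polytope.

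Comparing with the first paragraph, the proof reduces to the identity $\Phi=\prod_{\alpha\in\posR,\,[\alpha]_1=2}(1-e^{-\alpha})^{-1}$, and this is the step I expect to be the main obstacle. In Proposition \ref{prop:type2} the analogous factor was the single geometric series $(1-e^{-\omega_a})^{-1}$, produced because $\omega_a=\theta$ was itself the unique level-$2$ root and the polytope was one-dimensional; here $\Phi$ is a limit of sums over a genuinely higher-dimensional polytope whose top face interacts nontrivially with the Levi action of $W_{D_7}$, and it must be matched against a product over all $14$ level-$2$ roots. This is precisely the Weyl-denominator-flavored combinatorial identity, specific to the node and type, that the introduction warns is not automatic; carrying it out requires the explicit data $(\la_j,b_j)_{j\in J_1}$ of Chari's formula together with the identification of the level-$2$ roots as the weights of the vector representation of $D_7$. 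Once $\Phi$ is so evaluated we obtain $\ncC{1}=\ncMY{1}$, and Proposition \ref{prop:fixdomain} upgrades this to $\nc{1}=\ncC{1}=\ncMY{1}$.
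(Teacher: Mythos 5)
Your overall strategy is the same as the paper's: reduce to $\ncC{1}=\ncMY{1}$ via Proposition \ref{prop:fixdomain}, insert Chari's polyhedral formula into the Weyl character formula, take the normalized limit, and compare with $\ncMY{1}$ using the fact that the positive roots with $[\alpha]_1=2$ are the $14$ weights of the vector representation of the $D_7$ Levi subalgebra (your root combinatorics here is correct). But the proposal has a genuine gap: you never write down the polyhedral data for $(E_8,a=1)$ --- it is $\Q{1}{m}=\sum_{\x\in F_m^{(1)}}\chi(L(x_1\omega_1+x_2\omega_7))$ with $F_m^{(1)}=\{(x_0,x_1,x_2):x_0+x_1+x_2=m,\ x_j\in\Z_{\geq0}\}$ --- so your $\Phi$ is never more than a name, and the identity $\Phi=\prod_{\alpha\in\posR,\,[\alpha]_1=2}(1-e^{-\alpha})^{-1}$, which you defer as ``the main obstacle'', is exactly the content of the proposition rather than a detail that can be postponed. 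It also cannot be waved through as a routine check: even after making everything explicit, the paper verifies the resulting identity by computer algebra (see \cite{GHMYF2018}), not by a conceptual argument.

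There is moreover a step that fails as written. With the actual data, the normalized limit is
$$
\ncC{1}=\frac{S_1}{(1-e^{-\omega_1})\,e^{\rho}\weylD},\qquad
S_1=\sum_{w\in W_{\omega_1}}(-1)^{\ell(w)}\frac{e^{w(\rho)}}{1-e^{-w(\omega_1-\omega_7)}},
$$
because the $\omega_7$-direction of the polytope contributes a geometric factor that sits \emph{inside} the alternating sum and depends on $w$. Consequently (\ref{eq:para2}) with $J=I\backslash\{1\}$ cannot be used to ``replace the $W_{\omega_1}$-part and cancel it against $\weylD$'': that device works in Propositions \ref{prop:type1} and \ref{prop:type2} only because there the extra geometric factor --- trivial in the first case, and $(1-e^{-\omega_a})^{-1}$ with $\omega_a$ fixed by every $w\in W_{\omega_a}$ in the second --- is constant over the sum and pulls out. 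The paper instead decomposes $W_{\omega_1}=W_{J(D_7)}$ into cosets of $W_{J(D_6)}$, where $J(D_6)=\{2,3,4,5,6,8\}$ fixes $\omega_1-\omega_7$, applies the denominator formula only to the inner $W_{J(D_6)}$-sum, and then reduces $\ncC{1}=\ncMY{1}$ to the polynomial identity
$$
S_1\prod_{\alpha\in\posR,\,[\alpha]_1=2}(1-e^{-\alpha})=(1-e^{-\omega_1})\, e^{\rho}\prod_{\alpha\in\posR,\,[\alpha]_1=0}(1-e^{-\alpha}),
$$
which is then checked by machine. So your setup is the right one, but the argument stops precisely where the work begins, and the shortcut you propose for the intermediate reduction does not go through.
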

\begin{proof}
By Proposition \ref{prop:fixdomain}, it is enough to show $\ncC{1} = \ncMY{1}$.
For $a=1$, the polyhedral formula is given by
$\Q{1}{m}=\sum_{\mathbf{x}\in F_m^{(1)}} \chi(L(\la_{\mathbf{x}}))$, where 
$F_m^{(1)}=\{(x_0,x_1,x_2) : x_0+x_1+x_2=m,\, x_j\in \Z_{\geq 0}\}$, and $\la_{\x}=x_1\omega_1+x_2\omega_7$ for $\x=(x_0,x_1,x_2)\in F_m^{(1)}$.
From this, we get
$$
\nc{1} = \frac{\sum_{w\in W_{\omega_1}}(-1)^{\ell(w)}e^{w(\rho)}\frac{1}{(1-e^{-w(\omega_1-\omega_7)})}}{(1-e^{-\omega_1})e^{\rho}\weylD}
$$
Let $J(D_7) :  = \{2,3,4,5,6,7,8\}$ and $J(D_6) : = \{2,3,4,5,6,8\}$. Then
$$
\begin{aligned}
S_1 : = \sum_{w\in W_{\omega_1}}(-1)^{\ell(w)}e^{w(\rho)}\frac{1}{(1-e^{-w(\omega_1-\omega_7)})} & = 
\sum_{w\in W_{J(D_7)}/W_{J(D_6)}}(-1)^{\ell(w)}\frac{w\left(\sum_{w'\in W_{J(D_6)}}(-1)^{\ell(w')}e^{w'(\rho)}\right) }{(1-e^{-w(\omega_1-\omega_7)})}\\
& = 
\sum_{w\in W_{J(D_7)}/W_{J(D_6)}}(-1)^{\ell(w)}\frac{w\left(e^{\rho}\prod_{\alpha \in \posR(J(D_6))}(1-e^{-\alpha})\right) }{(1-e^{-w(\omega_1-\omega_7)})}.
\end{aligned}
$$
Hence, $\ncC{1} e^{\rho}\weylD = \ncMY{1}e^{\rho}\weylD$ is equivalent to
$$
\frac{S_1}{(1-e^{-\omega_1})} = \frac{e^{\rho}\weylD}{\MYprod{1}}=
\frac{e^{\rho}\prod_{\alpha \in \posR,\, [\alpha]_1=0}(1-e^{-\alpha})}{\prod_{\alpha \in \posR,\, [\alpha]_1=2}(1-e^{-\alpha})},
$$
where the last equality follows from $[\theta]_1=2$. We may use computer algebra systems to check the following equivalent identity:
$$
S_1 \prod_{\alpha \in \posR,\, [\alpha]_1=2}(1-e^{-\alpha}) = (1-e^{-\omega_1}) \times e^{\rho}\prod_{\alpha \in \posR,\, [\alpha]_1=0}(1-e^{-\alpha}).
$$
In fact, both sides are polynomials in $\Z[\alpha_j]_{j\in I}$; see \cite{GHMYF2018} for a computer implementation.
\end{proof}

\begin{prop}\label{prop:type3F4}
Assume that $\g$ is of type $F_4$ and $a=4$. Then (\ref{eqn:main}) holds.
\end{prop}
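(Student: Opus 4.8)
The plan is to prove $\nc{4} = \ncMY{4}$ for $\g$ of type $F_4$ by following exactly the template of Propositions \ref{prop:type3E8}, since the node $a=4$ in type $F_4$ does not satisfy $[\theta]_4 \le 1$ (so Proposition \ref{prop:type1} does not apply) and $\omega_4$ is not the highest root (so Proposition \ref{prop:type2} does not apply either). By Proposition \ref{prop:fixdomain} it suffices to establish $\ncC{4} = \ncMY{4}$ as functions on $\dom_1$. The first step is to write down the polyhedral formula (\ref{eqn:KRpolyform}) for $\Q{4}{m}$ from Chari's work \cite{MR1836791}, which is legitimate because $[\theta]_4 = 2 \le 2$ is exactly the hypothesis under which the polyhedral formula is available. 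This expresses $\Q{4}{m}$ as a sum of irreducible characters $\chi(L(\la_{\mathbf{x}}))$ over the lattice points $\mathbf{x} \in F_m^{(4)}$.

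Next I would apply the Weyl character formula to each $\chi(L(\la_{\mathbf{x}}))$ and extract the coefficient $\ncC{4} = C^{(4)}_{\omega_4}$ of the dominant exponential $e^{m\omega_4}$, exactly as in the displayed computations of Propositions \ref{prop:type3E8} and \ref{prop:type1}. Summing the geometric-type series over the polytope $F_m^{(4)}$ in the limit $m \to \infty$ produces a finite sum $S_4$ over a quotient of parabolic subgroups $W_{\omega_4}$, where the denominators are certain factors $(1-e^{-w(\cdots)})$. Using the parabolic Weyl denominator identity (\ref{eq:para2}) to resum the inner sum over the smaller parabolic subgroup, the problem reduces to an identity of the shape
$$
\frac{S_4}{(\text{product of low-order factors})} = \frac{e^{\rho}\prod_{\alpha\in\posR,\,[\alpha]_4=0}(1-e^{-\alpha})}{\prod_{\alpha\in\posR,\,[\alpha]_4=2}(1-e^{-\alpha})},
$$
where the right-hand side comes from rewriting $\ncMY{4}$ together with $\weylD$ and using $[\theta]_4 = 2$ to control which roots can have $[\alpha]_4 = 2$.

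\textbf{The main obstacle} will be precisely this final combinatorial identity, which is the step flagged in the introduction as ``still not an entirely automatic procedure.'' Clearing denominators, it becomes an equality between two explicit elements of $\Z[P]$ (in fact one can check both sides lie in $\Z[\alpha_j]_{j\in I}$), and its verification is the $F_4$-analogue of the $E_8$ computation in Proposition \ref{prop:type3E8}. I expect the cleanest route is to follow that precedent verbatim: reduce to the polynomial identity
$$
S_4 \prod_{\alpha\in\posR,\,[\alpha]_4=2}(1-e^{-\alpha}) = (\text{low-order factors})\times e^{\rho}\prod_{\alpha\in\posR,\,[\alpha]_4=0}(1-e^{-\alpha}),
$$
and then discharge it by a finite computer-algebra check, citing the same implementation \cite{GHMYF2018}. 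The only genuinely $F_4$-specific bookkeeping is identifying the correct index set $J_4$, the weights $(\la_j)$, and the parabolic quotient appearing in $S_4$; once those are pinned down, the structure of the argument is identical to the $E_8$ case, and no new idea beyond the polyhedral-formula-plus-Weyl-denominator mechanism is needed.
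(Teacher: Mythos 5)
Your proposal follows essentially the same route as the paper's own proof: reduce to $\ncC{4}=\ncMY{4}$ via Proposition \ref{prop:fixdomain}, invoke Chari's polyhedral formula for $(\g,a)=(F_4,4)$ (where $F_m^{(4)}=\{(x_0,x_1,x_2): 2x_0+2x_1+x_2=m\}$ and $\la_{\mathbf{x}}=x_1\omega_1+x_2\omega_4$), extract the dominant coefficient via the Weyl character formula and the parabolic denominator identity (\ref{eq:para2}) to form $S_4$ over $W_{\{1,2,3\}}/W_{\{2,3\}}$, and discharge the resulting polynomial identity in $\Z[P]$ by the computer-algebra check of \cite{GHMYF2018}. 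The only detail left implicit in your plan is the explicit $F_4$ polyhedral data, which the paper supplies but which you correctly identify as routine bookkeeping.
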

\begin{proof}
Again, it is enough to show $\ncC{4} = \ncMY{4}$ by Proposition.
For $a=4$, the polyhedral formula is 
$
\Q{4}{m}=\sum_{\mathbf{x}\in F_m^{(4)}} \chi(L(\la_{\mathbf{x}}))
$
where 
$F_m^{(4)}=\{(x_0,x_1,x_2) : 2x_0+2x_1+x_2=m,\, x_j\in \Z_{\geq 0}\}$, and $\la_{\mathbf{x}}=x_1\omega_1+x_2\omega_4$ for $\mathbf{x}\in F_m^{(4)}$.
From this, we obtain
$$
\nc{4} = \frac{\sum_{w\in W_{\omega_4}}(-1)^{\ell(w)}e^{w(\rho)}\frac{1}{(1-e^{-w(2\omega_4-\omega_1)})}}{(1-e^{-2\omega_4})e^{\rho}\weylD}.
$$

Let
$$
\begin{aligned}
S_4 : = \sum_{w\in W_{\omega_4}}(-1)^{\ell(w)}e^{w(\rho)}\frac{1}{(1-e^{-w(2\omega_4-\omega_1)})} & = 
\sum_{w\in W_{\{1,2,3\}}/W_{\{2,3\}}}(-1)^{\ell(w)}\frac{w\left(\sum_{w'\in W_{\{2,3\}}}(-1)^{\ell(w')}e^{w'(\rho)}\right) }{(1-e^{-w(2\omega_4-\omega_1)})}\\
& = 
\sum_{w\in W_{\{1,2,3\}}/W_{\{2,3\}}}(-1)^{\ell(w)}\frac{w\left(e^{\rho}\prod_{\alpha \in \posR(\{2,3\})}(1-e^{-\alpha})\right) }{(1-e^{-w(2\omega_4-\omega_1)})}.
\end{aligned}
$$
As in the proof of Proposition \ref{prop:type3E8}, $\nc{4} = \ncMY{4}$ is equivalent to 
$$
S_4\times \prod_{\alpha \in \posR,\, [\alpha]_4=2}(1-e^{-\alpha}) = (1-e^{-2\omega_4}) \times e^{\rho}\prod_{\alpha \in \posR,\, [\alpha]_4=0}(1-e^{-\alpha})
$$
for which we may use computer algebra systems to check the equality; see \cite{GHMYF2018} for an implementation.
% The coset $W_{\{1,2,3\}}/W_{\{2,3\}}$ has 6 elements, and its representatives can be chosen as 
% $$
% \{e,s_1,s_2s_1,s_3s_2s_1,s_2s_3s_2s_1,s_1s_2s_3s_2s_1\}.
% $$
% Note that $2\omega_4-\omega_1= \alpha_2+2\alpha_3+2\alpha_4$ and 
% $$
% \posR(\{2,3\}) = \{\alpha_2,\alpha_3,\alpha_2+\alpha_3,\alpha_2+2\alpha_3\}
% $$
\end{proof}

\section{Proof of Main Theorem : general cases}\label{sec:mainproof}
Now we finish the proof of Theorem \ref{thm:main}.
\begin{proof}
Whenever $\ncC{a}$ is defined (i.e. Theorem \ref{thm:linQ} holds for that $a\in I$), $\ncC{a} = \nc{a}$ by Proposition \ref{prop:fixdomain}. Thus, it is sufficient to prove $\ncC{a}=\ncMY{a}$ when $\g$ is not either of type $E_7$ or $E_8$. To show this, we use the fact that both $\ncCfam$ and $\ncMYfam$ satisfy (\ref{eqn:norsys}) for any $a\in I$ (Corollary \ref{cor:algrel2} and Propositions \ref{prop:relforMY}). Note also that all $\ncC{a}$, $\nc{a}$ and $\ncMY{a}$ take only non-zero real values on $\dom_1$; by definition for $\ncMY{a}$ and by Proposition \ref{prop:fixdomain} for $\ncC{a}$ and $\nc{a}$. We clarify that we are considering (\ref{eqn:norsys}) as a relation among non-vanishing functions on $\dom_1$.

Let us give a brief argument for each type.  

When $\g$ is of type $A_r$, Proposition \ref{prop:type1} implies that $\nc{a} = \ncMY{a}$ for all $a\in I$ as $[\theta]_a=1$.

When $\g$ is of type $B_r\, (r\geq 2)$, $[\theta]_1=1$ and $[\theta]_a=2$ for $2\leq a\leq r$. Proposition \ref{prop:type1} applies for $a=1$ and thus, $\ncC{1} = \ncMY{1}$.
For $1\leq a\leq r-1$, (\ref{eqn:norsys}) takes the form
\begin{equation}\label{eqn:induction}
\Xrel{a} = \X{a-1}\X{a+1},\qquad \X{0}=1
\end{equation}
which clearly shows that any non-zero $X=\Xfam$ satisfying (\ref{eqn:induction}) is completely determined by $\X{1}$. Since both $\ncCfam$ and $\ncMYfam$ satisfy (\ref{eqn:induction}) and $\ncC{1} = \ncMY{1}$, $\ncC{a}=\ncMY{a}$ for all $a\in I$.

When $\g$ is of type $C_r\, (r\geq 3)$, $[\theta]_a=2$ for $1\leq a\leq r-1$ and $[\theta]_r=1$. Proposition \ref{prop:type1} implies that $\ncC{r}= \ncMY{r}$. For $a\in \{r-1,r\}$, (\ref{eqn:norsys}) gives
$$
\begin{cases} 
\Xrel{r} = \X{r-1} \\
\Xrel{r-1} = \X{r-2}(\X{r})^2.
\end{cases}
$$
For any non-zero $X=\Xfam$ satisfying the above, $\X{r-1}$ and $\X{r-2}$ are uniquely determined by $\X{r}$. For $2\leq a\leq r-2$, (\ref{eqn:norsys}) gives the same relation as (\ref{eqn:induction}) by which we can write $\X{a}$ with $1\leq a\leq r-3$ entirely in terms of $\X{r-2}$ and in turn, in terms of $\X{r}$. This proves $\ncC{a} = \ncMY{a}$ for all $a\in I$.

When $\g$ is of type $D_r\, (r\geq 4)$, $[\theta]_a=1$ for $a\in \{1,r-1,r\}$ and $[\theta]_a=2$ for $2\leq a\leq r-2$. From Proposition \ref{prop:type1} we have $\ncC{a}= \ncMY{a}$ for $a\in \{1,r-1,r\}$. Now (\ref{eqn:norsys}) with $1\leq a\leq r-3$ gives the same relation as (\ref{eqn:induction}) by which we can write $\X{a}$ with $1\leq a\leq r-2$ in terms of $\X{1}$ for non-zero $X=\Xfam$. This proves $\ncC{a} = \ncMY{a}$ for all $a\in I$.

When $\g$ is of type $E_6$, $([\theta]_1,\dots, [\theta]_6)=(1,2,3,2,1,2)$. Proposition \ref{prop:type1} implies that $\ncC{a} = \ncMY{a}$ for $a\in \{1,5\}$. Let us write (\ref{eqn:norsys}) for $a\in \{1,2,5,3\}$ :
$$
\begin{cases} 
\Xrel{1} = \X{2} \\
\Xrel{2} = \X{1}\X{3} \\
\Xrel{5} = \X{4} \\
\Xrel{3} = \X{2}\X{4}\X{6}.
\end{cases}
$$
A non-zero solution $\Xfam$ of this system is uniquely determined by $\X{1}$ and $\X{5}$, which proves $\ncC{a} = \ncMY{a}$ for all $a\in \{1,\dots,6\}$.

When $\g$ is of type $E_7$, $([\theta]_1,\dots, [\theta]_7)=(2, 3, 4, 3, 2, 1, 2)$. We need some care since 
Corollary \ref{cor:algrel2} is not available. Instead, we use Proposition \ref{thm:algrel} directly. From Propositions \ref{prop:type1} and \ref{prop:type2}, $\ncC{a}= \ncMY{a}$ for $a\in \{1,6\}$. Write (\ref{eqn:norsys}) for $a\in \{1,2,6,5,3\}$ :
$$
\begin{cases} 
\Xrel{1} = \X{2} \\
\Xrel{2} = \X{1}\X{3} \\
\Xrel{6} = \X{5} \\
\Xrel{5} = \X{4}\X{6} \\
\Xrel{3} = \X{2}\X{4}\X{7}.
\end{cases}
$$
We can observe that a non-zero solution $\Xfam$ of the above system is uniquely determined by $\X{1}$ and $\X{6}$. If we set 
$$
X_0=(\X{1},\dots,\X{7})=(\ncC{1},\ncC{2},\ncC{3},\nc{4},\ncC{5},\ncC{6},\ncC{7}),
$$
then $X_0$ satisfies the above system of equation by Proposition \ref{thm:algrel} (note again that $\ncC{4}$ cannot even be defined). Hence, $X_0 = \ncMYfam$ for all $a\in \{1,\dots,7\}$. Therefore, $\nc{a} = \ncMY{a}$ for all $a\in \{1,\dots,7\}$.

When $\g$ is of type $E_8$, $([\theta]_1,\dots, [\theta]_8)=(2, 4, 6, 5, 4, 3, 2, 3)$. Proposition \ref{thm:algrel} applies for $a=\{1,2,6,7\}$, and thus we have (\ref{eqn:norsys}) only for these nodes. From Propositions \ref{prop:type2} and \ref{prop:type3E8}, $\ncC{a}= \ncMY{a}$ for $a\in \{1,7\}$. Write (\ref{eqn:norsys}) for $a\in \{1,2,6,7\}$ :
$$
\begin{cases} 
\Xrel{1} = \X{2} \\
\Xrel{2} = \X{1}\X{3} \\
\Xrel{7} = \X{6} \\
\Xrel{6} = \X{5}\X{7}.
\end{cases}
$$
A non-zero solution $(\X{1},\X{2},\X{3},\X{5},\X{6},\X{7})$ of the above is uniquely determined by $\X{1}$ and $\X{7}$. If we set 
$$
X_0=(\X{1},\X{2},\X{3},\X{5},\X{6},\X{7})=(\ncC{1},\ncC{2},\nc{3},\nc{5},\ncC{6},\ncC{7}),
$$
then $X_0$ satisfies the above system of equation by Proposition \ref{thm:algrel}. Therefore, $\nc{a} = \ncMY{a}$ for $a\in \{1,2,3,5,6,7\}$.

When $\g$ is of type $F_4$, %$([\theta]_1,\dots, [\theta]_4)=(2,3,4,2)$. 
$\ncC{a}= \ncMY{a}$ for $a\in \{1,4\}$ from Propositions \ref{prop:type2} and \ref{prop:type3F4}. Writing (\ref{eqn:norsys}) for $a\in \{1,4\}$ :
$$
\begin{cases} 
\Xrel{1} = \X{2} \\
\Xrel{4} = \X{3},
\end{cases}
$$
we can conclude $\ncC{a} = \ncMY{a}$ for all $a\in \{1,2,3,4\}$.

When $\g$ is of type $G_2$, 
%$([\theta]_1,[\theta]_2)=(2,3)$. 
$\ncC{1}= \ncMY{1}$ from Proposition \ref{prop:type2}. From (\ref{eqn:norsys}) for $a=1$ we get
$$
\Xrel{1} = \X{2},
$$
which implies $\ncC{a} = \ncMY{a}$ for all $a\in \{1,2\}$.
\end{proof}

\section*{Acknowledgements}
We thank David Hernandez for his comments on an earlier version of this paper.
\bibliographystyle{amsalpha}
\bibliography{proj,proj_nonmr}
\end{document}